\newcommand{\mathdot}{{\mathbf{\scriptscriptstyle\bullet}}}
\def\cdh{\mathrm{cdh}}
\def\oo{\otimes}
\def\oozar{\otimes_\zar}
\def\Pic{\operatorname{Pic}}
\def\Psh{\operatorname{Psh}}
\def\Sh{\operatorname{Sh}}
\def\Sch{\operatorname{Sch}}
\def\Spec{\operatorname{Spec}}
\def\Sym{\operatorname{Sym}}
\def\zar{\mathrm{zar}}
\def\lra{\longrightarrow}
\def\map#1{\ {\buildrel #1 \over \lra}\ }
\def\smap#1{\ {\buildrel #1 \over \to}\ }
\newcommand{\comment}[1]{}	
\newcommand{\bbL}{\mathbb L}
\newcommand{\Ab}{\mathbf{Ab}}
\newcommand{\Ch}{\mathbf{Ch}}
\def\F{F}  
\def\cF{\mathcal F}
\def\cG{\mathcal G}
\newcommand\calL{\mathcal L}
\def\cO{\mathcal O}
\newcommand{\A}{\mathbb{A}}
\newcommand{\bbH}{\mathbb H}
\newcommand{\Q}{\mathbb{Q}}
\numberwithin{equation}{section}
\theoremstyle{plain}
\newtheorem{thm}[equation]{Theorem}
\newtheorem*{thm*}{Theorem}
\newtheorem{cor}[equation]{Corollary}
\newtheorem{lem}[equation]{Lemma}
\newtheorem{prop}[equation]{Proposition}
\newtheorem{variant}[equation]{Variant}
\theoremstyle{definition}
\newtheorem{defn}[equation]{Definition}
\theoremstyle{remark}
\newtheorem{rem}[equation]{Remark}
\newtheorem{ex}[equation]{Example}
\newtheorem{subrem}{Remark}[equation] 
\newtheorem*{notation}{Notation}
\begin{document}
\bibliographystyle{plain}

\title{$K$-theory of line bundles and smooth varieties}

\author{C. Haesemeyer}
\thanks{Haesemeyer was supported by ARC DP-170102328}
\address{School of Mathematics and Statistics, University of Melbourne,
VIC 3010, Australia}
\email{christian.haesemeyer@unimelb.edu.au}

\author{C. Weibel}
\thanks{Weibel was supported by NSF grant DMS-146502}
\address{Dept.\ of Mathematics, Rutgers University, New Brunswick,
NJ 08901, USA} \email{weibel@math.rutgers.edu}

\date{\today}

\begin{abstract}
We give a $K$-theoretic criterion for a quasi-projective variety
to be smooth. If $\bbL$ is a line bundle corresponding to an ample
invertible sheaf on $X$, it suffices that $K_q(X)\cong K_q(\bbL)$
for all $q\le\dim(X)+1$.
\end{abstract}

\maketitle

\section*{}

Let $X$ be a quasi-projective variety over a field $k$ of characteristic~0.
The main result of this paper gives a $K$-theoretic criterion for
$X$ to be smooth. For affine $X$, such a criterion was given in
\cite{chw-v}: it suffices that $X$ be $K_{d+1}$-regular for $d=\dim(X)$,
i.e., that $K_{d+1}(X)\cong K_{d+1}(X\times\A^m)$ for all $m$.
If $X$ is affine, we also showed that $K_{d+1}$-regularity of $X$
is equivalent to the condition that $K_i(X)\cong K_i(X\times\A^1)$
for all $i\le d+1$.

We also showed that $K_{d+1}$-regularity is insufficient for
quasi-projective $X$; see \cite[Thm.\,0.2]{chw-v}. In this paper we prove:

\begin{thm}\label{main}
Let $X$ be quasi-projective over a field $k$ of characteristic~0,
of dimension $d$, and let $\bbL=\Spec(\Sym\,\calL)$ be the line bundle
corresponding to an ample invertible sheaf $\calL$ on $X$.

If $K_i(\bbL)\cong K_{i}(X)$ for all $i\le n$
then $X$ is regular in codimension $<n$.

If $K_i(\bbL)\cong K_{i}(X)$ for all $i\le d+1$,
then $X$ is regular.
\end{thm}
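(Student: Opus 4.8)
The plan is to transport the hypothesis from $K$-theory to cyclic homology, exploit the grading on $\Sym\,\calL$, and use ampleness to recover the differential-form data of $X$. Observe that the second assertion is the first in the extremal case $n=d+1$: since $\dim X=d$ every point $x$ has $\dim\cO_{X,x}\le d$, so ``regular in codimension $<d+1$'' simply means regular; this is also the case in which the degree-counting below has to be sharpest. I concentrate on the first assertion.

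Write $\pi\colon\bbL\to X$ for the structure map and $z\colon X\to\bbL$ for the zero section, so that $\pi z=\mathrm{id}$ and $\pi^*$ is a split monomorphism in $K$-theory; let $\tK(\bbL)$ be the complementary summand, so the hypothesis reads $\tK_i(\bbL)=0$ for $i\le n$. Because $\bbL\to X$ is a Zariski-locally trivial line bundle and $KH$ is homotopy invariant and satisfies Nisnevich (hence Zariski) descent, $\pi^*\colon KH(X)\to KH(\bbL)$ is an equivalence; hence $\tK(\bbL)$ is the reduced homotopy fibre of $K(\bbL)\to KH(\bbL)$, i.e.\ reduced infinitesimal $K$-theory. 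By the characteristic-zero machinery relating $K$-theory and cyclic homology through $\cdh$-descent (Corti\~nas; Corti\~nas--Haesemeyer--Schlichting--Weibel; compare \cite{chw-v}), this fibre is, up to a fixed shift, the reduced homotopy fibre of $HC(\bbL/k)\to\bbH_\cdh(\bbL,HC(-/k))$. So the hypothesis becomes the statement that the cyclic homology of $\bbL$ agrees with its $\cdh$-fibrant version, on reduced summands, through a range of degrees governed by $n$.

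I would then bring in the $\mathbb G_m$-action on $\bbL=\Spec(\Sym\,\calL)$, whose weights refine every invariant in sight. Since $\bbL\to X$ is a line bundle its relative cotangent complex is as simple as possible ($L_{\bbL/X}=\pi^*\calL$, placed in weight $1$, and $\wedge^j$ of a line bundle vanishes for $j\ge 2$), so the weight-$m$ part ($m\ge1$) of the Hochschild complex of $\bbL/k$ is built, over $p\ge0$, from $\calL^{\oo m}$ tensored with the sheaves $\wedge^p L_{X/k}$ and $\wedge^{p-1}L_{X/k}$ placed in consecutive degrees; the $\cdh$-fibrant version is the same with $\wedge^\bu L_{X/k}$ replaced by the $\cdh$-fibrant differentials $\underline\Omega^\bu_X$ (the line bundle $\calL^{\oo m}$ coming out of $\cdh$-cohomology by the projection formula). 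Hence the reduced obstruction is a sum, over $m\ge1$ and $p\ge0$, of hypercohomology groups $\bbH^*(X,\calL^{\oo m}\oo\,\mathcal Q^p)$, where $\mathcal Q^p$ is the cone of the comparison $\wedge^p L_{X/k}\to\underline\Omega^p_X$ --- a bounded complex of coherent sheaves supported on $X_{\mathrm{sing}}$. Ampleness of $\calL$ now enters twice. First, for $m\gg0$ the twists $\calL^{\oo m}\oo\,\mathcal H^*(\mathcal Q^p)$ are globally generated and (by Serre vanishing, if need be via a projective compactification or chart-by-chart on the affine opens $X_f$) have no higher cohomology, so the weight-$m$ obstruction collapses to $\bigoplus_p H^0(X,\calL^{\oo m}\oo\,\mathcal H^*(\mathcal Q^p))$. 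Second, a nonzero coherent sheaf whose high $\calL$-twists are globally generated has nonzero $H^0$. So the hypothesis forces the cohomology sheaves of the $\mathcal Q^p$ to vanish in a range of $(p,*)$ dictated by $n$.

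The last step is to turn this vanishing into regularity in codimension $<n$. On the smooth locus $\wedge^pL_{X/k}=\Omega^p_X=\underline\Omega^p_X$, so each $\mathcal Q^p$ is supported on $X_{\mathrm{sing}}$; conversely, at a point $x$ of small codimension non-regularity is witnessed by $\Omega^\bu_{X/k}$ failing to map isomorphically to $\underline\Omega^\bu_X$ or by the cotangent complex $L_{\cO_{X,x}/k}$ having homology in negative degrees --- each of which is the non-vanishing at $x$ of a cohomology sheaf of some $\mathcal Q^p$. The heart of the proof, and what I expect to be the main obstacle, is matching the range of $(p,*)$ that the hypothesis controls against the range needed to witness non-regularity at points of codimension $<n$; in the borderline codimension one has to recover the degree that a naive count appears to lose, and one must verify that $\cdh$-sheafification is compatible with the weight and Hodge decompositions and with the projection formula. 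Granting the match, for every $x$ with $\dim\cO_{X,x}<n$ the complex $L_{\cO_{X,x}/k}$ is concentrated in degree $0$ with $\Omega^1_{\cO_{X,x}/k}$ locally free of the expected rank, i.e.\ $\cO_{X,x}$ is regular. The low-degree cases can also be checked by hand: $n=0$ is vacuous, and $\tK_1(\bbL)=0$ already forces $\Spec(\Sym\,\calL)$, hence $X$, to be reduced.
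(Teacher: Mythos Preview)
Your plan is essentially the paper's: pass from $K$-theory to the fibre of $HC\to\bbH_\cdh(-,HC)$ via Corti\~nas and $KH$-invariance of line bundles (this is the last line of Proposition~\ref{Kunneth}), decompose by the $\mathbb G_m$-weights on $\Sym\calL$ so that the reduced piece is $\bigoplus_{j\ge1}F_{HH}\oo\calL^{\oo j}$ (your $\mathcal Q^p$ are exactly the Hodge summands $F_{HH}^{(p)}$), use ampleness to collapse the hypercohomology spectral sequence and force the cohomology \emph{sheaves} to vanish in the relevant range, and finally read off regularity stalkwise from the criterion in \cite[3.1]{chw-v}. So the architecture is right.

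The one genuine slip is the base field for cyclic homology. The Corti\~nas identification yields $\tK(\bbL)\simeq F_{HC/\Q}(\bbL,X)[1]$, not $F_{HC/k}$ as you wrote; and over $\Q$ your coherence claim for $\mathcal Q^p$ fails as stated, since already $\Omega^1_{k/\Q}$ is infinite-dimensional when $k$ is large, so Serre vanishing is not available. The paper repairs this in two moves you have not made: first a K\"unneth reduction to a subfield $k_0$ of finite transcendence degree (beginning of the proof of Theorem~\ref{thm:HH}); second, the Kassel--Sletsj\o e filtration (Lemma~\ref{lem:KS} and Corollary~\ref{cor:KS}), which expresses $H^*F_{HH/\Q}^{(p)}$ in terms of $\Omega^s_{k}\otimes_k H^*F_{HH/k}^{(p-s)}$ and thereby both proves coherence (Proposition~\ref{twist}) and, at the end, transfers $n$-connectivity of the stalks $F_{HH/\Q}(\cO_{X,x})$ to $F_{HH/k}(\cO_{X,x})$ (via \cite[4.8]{chw-v}), which is what the regularity criterion \cite[3.1]{chw-v} actually consumes. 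Once you insert this $\Q$-versus-$k$ bookkeeping, your ``main obstacle'' of matching degree ranges is precisely the content of \cite[3.1]{chw-v}, and your argument becomes the paper's.
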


For example, if $K_i(\bbL)\cong K_{i}(X)$ for all $i\le d$,
then $X$ has at most isolated singularities.

In the affine case, of course, every line bundle is ample, and
when $\bbL=\A^1_R$ we recover our previous result, proven in
\cite[0.1]{chw-v}:

\begin{cor}\label{Vorst}
If $R$ is essentially of finite type over a field of characteristic~0,
and $K_i(R)\cong K_i(R[t])$ for all $i\le n$ then
$R$ is regular in codimension $<n$.
\end{cor}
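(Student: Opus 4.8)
The plan is to obtain Corollary~\ref{Vorst} as the special case of Theorem~\ref{main} in which $X=\Spec(R)$ and $\calL=\cO_X$ is the trivial invertible sheaf. Two elementary facts make this go through. First, the structure sheaf $\cO_X$ is ample on any affine (indeed, any quasi-affine) scheme; this is essentially the standard characterization of quasi-affineness, so $\calL=\cO_X$ is an ample invertible sheaf on $X$. Second, $\Sym_{\cO_X}(\cO_X)\cong\cO_X[t]$ as graded $\cO_X$-algebras, so the associated line bundle is $\bbL=\Spec(\Sym\,\cO_X)=\Spec(\cO_X[t])=\A^1_X$; for affine $X=\Spec(R)$ this says $\bbL=\A^1_R=\Spec(R[t])$, and hence $K_i(\bbL)=K_i(R[t])$ for all $i$. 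Thus the hypothesis ``$K_i(R)\cong K_i(R[t])$ for all $i\le n$'' of the Corollary is literally the hypothesis ``$K_i(\bbL)\cong K_i(X)$ for all $i\le n$'' of Theorem~\ref{main}.

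With this dictionary in place, I would first treat the case that $R$ is finitely generated over $k$. Then $X=\Spec(R)$ is an affine, hence quasi-projective, variety over $k$ of dimension $d=\dim(R)$, and applying the first assertion of Theorem~\ref{main} to $\bbL=\A^1_R$ gives that $X$, and therefore $R$, is regular in codimension $<n$. Taking $n=d+1$ and invoking the second assertion likewise recovers the full regularity statement of \cite[0.1]{chw-v}. Note that nothing new is proved at this stage: all of the content sits in Theorem~\ref{main}, and here one only verifies the (elementary) ampleness of the structure sheaf and the identification $\bbL=\A^1_R$.

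The one point that genuinely requires care is the passage from finitely generated algebras to those merely essentially of finite type, since a localization of an affine variety need not itself be quasi-projective; the ampleness of $\cO_X$, however, holds for any affine $X$, so this is a comparatively mild issue. Here I would argue that both the hypothesis and the conclusion are local and compatible with filtered colimits of rings: writing $R=S^{-1}A$ with $A$ finitely generated over $k$ one has $R=\operatorname{colim}_{s\in S}A[1/s]$, $K$-theory commutes with this colimit, and $NK$-theory commutes with localization, so that regularity in codimension $<n$ can be tested on the local rings $R_{\mathfrak p}$ with $\dim R_{\mathfrak p}<n$ --- exactly as in the reduction already carried out in \cite{chw-v}. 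I expect this essentially-of-finite-type reduction to be the only step in the proof of Corollary~\ref{Vorst} that is not immediate from Theorem~\ref{main}.
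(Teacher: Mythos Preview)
Your proposal is correct and matches the paper's approach: the paper gives no separate proof of the corollary, only the one-sentence remark preceding it that in the affine case every line bundle is ample and taking $\bbL=\A^1_R$ recovers \cite[0.1]{chw-v}. You spell out exactly this specialization, and you are right that the passage from ``finite type'' (required by Theorem~\ref{main} as stated) to ``essentially of finite type'' is the only point needing extra care; the paper simply defers that to \cite{chw-v}, as you do.
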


The affine assumption in this corollary
is critical.  In \cite{chw-v},
we gave an example of a curve $Y$ which is
$K_n$-regular for all $n$, but which is not regular;
no affine open $U$ is even reduced.
However, $K_0(X)\ne K_0(\bbL)$ for the line bundle associated to
an ample $\calL$; see Example \ref{ex:DD} below.
In Theorem \ref{thm:cusp-bundle} we give a surface $X$
which is $K_n$-regular for all $n$, but which is not regular and such that $K_0(X)\ne K_0(\bbL)$ for the line bundle associated to
an ample $\calL$; it is a cusp bundle over an elliptic curve.

As in our previous papers \cite{chsw, chw-v, chww-fibrant},
our technique is to compare $K$-theory to cyclic homology
using $cdh$-descent and cyclic homology.  The parts
of $cdh$ descent we need are developed in Section 1, and applied
to give a formula for the cyclic homology of line bundles in Section 2.
The main theorem is proven in Section 3, and the two examples
are given in Section \ref{sec:4}.  

\begin{notation}
If $E$ is a presheaf of spectra, we write $\pi_nE$ for the presheaf
of abelian groups $X\mapsto \pi_nE(X)$; we say that a spectrum
$E$ is {\it $n$-connected} if $\pi_qE=0$ for all $q\le n$.
For example, $K_n(X)$ is the homotopy group
$\pi_nK(X)$ of the spectrum $K(X)$.

Similarly, if $E$ is a cochain complex of
presheaves, we may regard it as a presheaf of spectra via Dold-Kan
\cite[ch.\,10]{WH}.
Thus $\pi_iE(X)$ is another notation for $H^{-i}E(X)$.
We will use the cochain shift convention $E[i]^n=E^{i+n}$,
so that the spectrum corresponding to $E[1]$ is the suspension of
the spectrum of $E$, and $\pi_nE[1]=\pi_{n-1}E$.
Thus if $E$ is $n$-connected then $E[1]$ is
$(n+1)$-connected.
\end{notation}

\bigskip
\section{Zariski and $cdh$ descent}

In this paper, we fix a field of characteristic~0, and work with
the category $\Sch$ of schemes $X$ of finite type over the field.
We will be interested in the Zariski and $cdh$ topologies on $\Sch$.

If $\tau$ is a Grothendieck topology on $\Sch$, there is an
``injective $\tau$-local'' model structure on the category
$\Psh(\Ch(\Ab))$ of
presheaves of cochain complexes of abelian groups on $\Sch$.
In this model structure, a map $A\to B$ is a cofibration
if $A(X)\to B(X)$ is an injection for all $X$, and
it is a weak equivalence if $H^n A\to H^n B$ induces an isomorphism
on the associated $\tau$-sheaves. The fibrant replacement of
$A$ in this model structure is written as $A\to\bbH_\tau(-,A)$.
We say that $A$ {\it satisfies $\tau$-descent} if the
canonical map $A(X)\to\bbH_\tau(X,A)$ is a
quasi-isomorphism for all $X$.  There is a parallel notion of
$\tau$-descent for presheaves of spectra.

If $A$ is a sheaf then $A\to\bbH_\tau(-,A)$ is an injective resolution;
it follows that $\bbH^n_\tau(X,A) = H^n\bbH_\tau(X,A)$ for all $n$.
For a complex $A$, the hypercohomology group $\bbH^n_\tau(X,A)$ equals
$H^{n}\bbH_\tau(X,A).$ See \cite[3.3]{chsw} for these facts.

The inclusion of complexes of sheaves (for a topology $\tau$)
into complexes of presheaves
induces an injective $\tau$-local model structure on complexes of
sheaves, and the inclusion is a Quillen equivalence;
see \cite[5.9]{jardine-LHT}.

For the Zariski, Nisnevich and $cdh$ topologies, there is a parallel
``injective $\tau$-local'' model structure on the category
$\Psh(\Ch(\cO_\tau))$ of presheaves of
complexes of $\cO_\tau$-modules, and the functor forgetting the module
structure is a Quillen adjunction.  In particular, if $A$ is a
presheaf of complexes of $\cO_\tau$-modules, the forgetful functor
sends its fibrant $\cO_\tau$-module replacement to a presheaf that is
objectwise weak equivalent to $\bbH_\tau(-,A)$.

\begin{ex}\label{HH descent}
The Hochschild complex $HH/k$ satisfies Zariski descent by
\cite[0.4]{WG}.  By definition, the cochain complex
$HH(X/k)$ is concentrated in negative cohomological degrees
and has the (quasi-coherent) Zariski sheaf $\cO_X^{\oo_k n+1}$ in
cohomological degree $-n$.
When $k$ is understood, we drop the '$/k$' from the notation.
We sometimes regard $HH$ as a sheaf of spectra, using Dold-Kan,
and use the notation $HH_{q}(X)=\pi_{q} HH(X)$ for
$\bbH_\zar^{-q}(X,HH)$.
Recall from  \cite[4.6]{WG} that
if $X$ is noetherian then $HH_q(X)=0$ for $q<-\dim(X)$.
\end{ex}


If $E$ is a complex of Zariski sheaves of $\cO$-modules on $\Sch/X$,
we may assume that $\bbH_\zar(-,E)$ is a complex of Zariski sheaves
of $\cO$-modules, and similarly for $\bbH_\cdh(-,E)$.
(See \cite[8.6]{jardine-LHT}.)
Thus it makes sense to form the sheaf tensor product
$\bbH_\tau(-,E)\oozar \calL$ 
with a Zariski sheaf $\calL$ of $\cO_X$-modules.

If $E$ is a Zariski sheaf of $\cO_X$-modules on $X$, then there is a Zariski
sheaf $E^\prime$ of $\cO$-modules on $\Sch/X$, unique up to unique isomorphism,
such that for every $f: Y \to X$ in $\Sch/X$ the restriction of $E^\prime$ to the small Zariski site of $Y$ is naturally isomorphic to the sheaf $f^*E$. In this paper we will always
work with this sheaf on the big site; so for example "an invertible sheaf $\calL$ on $X$" will indicate the sheaf on the big site associated in this way to an invertible sheaf on
$X$.


\begin{lem}\label{Hzar-autoeq}
If $\calL$ is an invertible sheaf on $X$, $\oozar\calL$
is an auto-equivalence of the category $\Sh(\Ch(\cO_\zar))/X$
of sheaves of complexes of $\cO_\zar$-modules on $\Sch/X$
which preserves cofibrations, fibrations and weak equivalences.
\end{lem}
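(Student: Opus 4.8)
The plan is to produce an explicit quasi-inverse, namely $\oozar\calL^{-1}$, and then to check compatibility with the model structure one class of maps at a time, postponing fibrations since those have no concrete description in the injective $\zar$-local model structure. First I would record that, $\calL$ being invertible, there is an invertible sheaf $\calL^{-1}$ on $X$ with $\calL\oozar\calL^{-1}\cong\cO_X$, and that passing to the big site $\Sch/X$ preserves this: for $f\colon Y\to X$ the restriction of $\calL^{-1}$ is $(f^*\calL)^{-1}$, still invertible. Applying the natural isomorphism $A\oozar\calL\oozar\calL^{-1}\cong A$ objectwise and degreewise then exhibits $\oozar\calL$ and $\oozar\calL^{-1}$ as mutually inverse auto-equivalences of the underlying category $\Sh(\Ch(\cO_\zar))/X$.

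Next I would treat cofibrations and weak equivalences. Locally on $X$ the sheaf $\calL$ is trivial, so $\oozar\calL$ is locally isomorphic to the identity functor; in particular it is exact (a local assertion). Exactness shows at once that $\oozar\calL$ preserves objectwise monomorphisms, i.e.\ cofibrations, and that the canonical comparison $(\mathcal H^nA)\oozar\calL\to\mathcal H^n(A\oozar\calL)$ of cohomology sheaves is an isomorphism. Since $\oozar\calL$ is an auto-equivalence of $\Sh(\cO_\zar)/X$ it reflects isomorphisms, so a map $f$ is a weak equivalence iff $f\oozar\calL$ is: thus $\oozar\calL$ preserves and reflects weak equivalences. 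Everything said applies verbatim to $\calL^{-1}$, so $\oozar\calL^{-1}$ too preserves cofibrations, trivial cofibrations and weak equivalences.

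For fibrations I would argue formally. A map $p$ is a fibration iff it has the right lifting property against every trivial cofibration. Because $\oozar\calL$ is an equivalence of categories it carries commutative squares to commutative squares bijectively, hence preserves and reflects the right lifting property; and by the previous paragraph $\oozar\calL$ carries the class of trivial cofibrations onto itself up to isomorphism, with quasi-inverse $\oozar\calL^{-1}$. Therefore $p$ is a fibration iff $p\oozar\calL$ is one, so $\oozar\calL$ preserves (and reflects) fibrations; the identical argument with ``cofibration'' in place of ``trivial cofibration'' handles trivial fibrations. Combined with the first two steps, this proves that $\oozar\calL$ is an auto-equivalence preserving cofibrations, fibrations and weak equivalences (and in fact reflecting all three).

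The step I expect to be the main obstacle is the one about fibrations, precisely because injective $\zar$-local fibrations admit no explicit description, so they cannot be checked by hand; the point is that an auto-equivalence of the category whose quasi-inverse also preserves trivial cofibrations automatically preserves fibrations, by the lifting-property bookkeeping above. The only other thing requiring a moment's care is the passage to the big site in the first step, which is why one must know that invertibility is stable under the pullbacks $f^*$.
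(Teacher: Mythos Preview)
Your proposal is correct and follows essentially the same approach as the paper's proof: both exhibit $\oozar\calL^{-1}$ as a quasi-inverse, use flatness/local triviality for cofibrations and weak equivalences, and handle fibrations by the lifting-property adjunction against trivial cofibrations preserved by $\oozar\calL^{-1}$. Your treatment of the fibration step is simply a more expanded version of the paper's terse ``by invertibility, it suffices to observe that if $A\to B$ is a trivial cofibration then so is $A\oozar\calL^{-1}\to B\oozar\calL^{-1}$.''
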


\begin{proof}
The functor $\oozar\calL^{-1}$ is a quasi-inverse to $\oozar\calL$.
Since $\calL$ is flat, $\oozar\calL$ preserves injections.
Since $\calL$ is locally trivial on $X$ (and hence on any $X$-scheme), and
$A\oozar\cO_X\cong A$,
$\oozar\calL$ preserves weak equivalences. Now suppose that
$C\to D$ is a Zariski-local fibration; we want to see that
$C\oozar\calL \to D\oozar\calL$ is a Zariski-local fibration.
By invertibility, it suffices to observe that if $A\to B$ is a
trivial cofibration of $\cO_\zar$ modules, then so is
$A\oozar\calL^{-1}\to B\oozar\calL^{-1}$, a fact we have
just verified.
\end{proof}

\begin{cor}\label{Hzar-L}
If $\calL$ is an invertible sheaf on $X$, and $A$ is a complex of
Zariski sheaves of $\cO$-modules, then there is a quasi-isomorphism
on $\Sch/X$:
\[
\bbH_\zar(-,A)\oozar \calL \map{\simeq} \bbH_\zar(-,A\oozar \calL).
\]
\end{cor}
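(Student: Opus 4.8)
The plan is to deduce this formally from Lemma~\ref{Hzar-autoeq}, using the standard fact that an auto-equivalence of a model category which preserves cofibrations, fibrations and weak equivalences commutes, up to a natural weak equivalence, with fibrant replacement.

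First I would note that $\oozar\calL$ preserves fibrant objects. The zero complex is the terminal object of $\Sh(\Ch(\cO_\zar))/X$, and $0\oozar\calL=0$ because $\calL$ is invertible; hence if $C\to 0$ is a fibration then so is $C\oozar\calL\to 0$ by Lemma~\ref{Hzar-autoeq}, i.e.\ $C\oozar\calL$ is fibrant. In particular $\bbH_\zar(-,A)\oozar\calL$ is fibrant. Next, apply $\oozar\calL$ to the canonical map $A\to\bbH_\zar(-,A)$, which is a trivial cofibration in the injective Zariski-local structure. Since $\oozar\calL$ preserves cofibrations and weak equivalences, $A\oozar\calL\to\bbH_\zar(-,A)\oozar\calL$ is again a trivial cofibration, now with fibrant target; that is, it is a fibrant replacement of $A\oozar\calL$. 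As $A\oozar\calL\to\bbH_\zar(-,A\oozar\calL)$ is another fibrant replacement of the same object, applying the lifting axiom to the square whose left vertical is the trivial cofibration $A\oozar\calL\to\bbH_\zar(-,A)\oozar\calL$, whose lower edge is $A\oozar\calL\to\bbH_\zar(-,A\oozar\calL)$, and whose right vertical is the fibration $\bbH_\zar(-,A\oozar\calL)\to 0$, produces a map
\[
\bbH_\zar(-,A)\oozar\calL\lra\bbH_\zar(-,A\oozar\calL)
\]
compatible with the maps from $A\oozar\calL$, and this map is a weak equivalence by the two-out-of-three property.

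It remains to observe that this weak equivalence is a quasi-isomorphism in the required sense: a weak equivalence in the injective Zariski-local structure is by definition an isomorphism on Zariski sheaf cohomology, and since both complexes are fibrant they satisfy Zariski descent, so the map is in fact an objectwise quasi-isomorphism on $\Sch/X$. I do not anticipate a real obstacle; the only point needing care is that $\oozar\calL$ is simultaneously a left and a right Quillen functor, so that it preserves both trivial cofibrations and fibrant objects — but this is exactly what Lemma~\ref{Hzar-autoeq} supplies, once one records that $0\oozar\calL=0$. (Alternatively, one could argue objectwise, using that $\calL$ is Zariski-locally isomorphic to $\cO_X$ and that $\bbH_\zar(-,-)$ is compatible with restriction to opens; but the model-category argument above is cleaner and avoids bookkeeping with the local trivializations.)
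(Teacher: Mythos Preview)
Your argument is correct and is exactly the unwinding of the paper's one-line proof (``This follows immediately from Lemma~\ref{Hzar-autoeq}''): you use that $\oozar\calL$ preserves trivial cofibrations and fibrant objects to conclude that it commutes with fibrant replacement up to weak equivalence, and then that a local weak equivalence between fibrant objects is an objectwise quasi-isomorphism. The only slip is that in your lifting square the map $A\oozar\calL\to\bbH_\zar(-,A\oozar\calL)$ is the \emph{top} edge, not the lower one; this does not affect the argument.
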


\begin{proof}
This follows immediately from Lemma \ref{Hzar-autoeq}.
\end{proof}

We write  $(a^*,a_*)$ for the usual
adjunction between Zariski and $cdh$ sheaves associated to the
change-of-topology morphism $a: (Sch/k)_\cdh\to (Sch/k)_\zar$.
Thus if $\cF$ is a sheaf of $\cO_\cdh$-modules on $(\Sch/X)_{\cdh}$,
$a_*\cF$ is the underlying sheaf of $\cO_\zar$-modules,
and for any Zariski sheaf $E$ of $\cO_X$-modules on $X$,
we may form the Zariski sheaf $a_*\cF\oo_{\cO_X}E$ on $\Sch/X$.

\smallskip
Recall from \cite[$0_I$(5.4.1)]{EGA} that a Zariski sheaf $E$ of
$\cO_X$-modules
is {\it locally free} if each point of $X$ has an open neighborhood
$U$ such that $E|_U$ is a free $\cO_U$-module, possibly of infinite rank.

\begin{lem}\label{EoF cdh}
If $E$ is a locally free sheaf on $X$, and
$\cF$ is a $cdh$ sheaf of $\cO_\cdh$-modules, then
$a_*\cF\oo_{\cO_X}E$ is a $cdh$ sheaf on $(\Sch/X)$.
\end{lem}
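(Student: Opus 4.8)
The plan is to reduce to the case in which $E$ is free, where $a_*\cF\oo_{\cO_X}E$ is just a direct sum of copies of $a_*\cF$, and then to verify that a direct sum of copies of a $cdh$ sheaf is again a $cdh$ sheaf.

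\emph{Reduction to $E$ free.} Choose a Zariski open cover $\{X_\alpha\}$ of $X$ with each $E|_{X_\alpha}$ free, possibly of infinite rank. Restriction to $\Sch/X_\alpha$ is exact and commutes with sheafification, with the change-of-topology adjunction $(a^*,a_*)$, and with $\oo_{\cO_X}$; in particular it carries $a_*\cF\oo_{\cO_X}E$ to $a_*(\cF|_{X_\alpha})\oo_{\cO_{X_\alpha}}(E|_{X_\alpha})$. A Zariski sheaf $G$ on $\Sch/X$ is a $cdh$ sheaf exactly when $G\to a_*a^*G$ is an isomorphism, and --- since the $X_\alpha$ cover $X$ --- this can be checked after restricting to each $\Sch/X_\alpha$. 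So I may assume $E=\cO_X^{(I)}$ for an index set $I$, in which case $a_*\cF\oo_{\cO_X}E\cong\bigoplus_{i\in I}a_*\cF$.

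\emph{Direct sums of $cdh$ sheaves.} Here I would use that the $cdh$ topology is generated by a bounded, complete, regular $cd$-structure, namely the Nisnevich distinguished squares together with the abstract blow-up squares; by Voevodsky's criterion (see \cite{chsw}), a presheaf of abelian groups is a $cdh$ sheaf precisely when it sends each such distinguished square to a cartesian square and sends $\emptyset$ to $0$. Since $\Z^{(I)}$ is flat, the endofunctor $M\mapsto M\oo_\Z\Z^{(I)}$ of $\Ab$ is exact, hence preserves cartesian squares and sends $0$ to $0$. Applying it to the values of the $cdh$ sheaf $\cF$ shows that the objectwise direct sum $Y\mapsto\bigoplus_{i\in I}\cF(Y)$ satisfies the criterion, so it is a $cdh$ sheaf; being in particular a Zariski sheaf, it is $\bigoplus_{i\in I}\cF$, and we are done.

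The step I expect to be the crux is the last one. An infinite direct sum does not in general commute with the limits occurring in a descent condition, so one is genuinely using that the $cdh$ $cd$-structure is built from finitely many squares involving only finite limits, together with the exactness of arbitrary direct sums in $\Ab$. This is also why Lemma \ref{EoF cdh} concerns a sheaf $\cF$ rather than a complex of sheaves: for a complex one would need a connectivity bound in place of this exactness.
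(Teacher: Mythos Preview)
Your proof is correct and follows the same two-step outline as the paper: reduce to $E$ free by Zariski-locality on $X$, then check that an arbitrary direct sum of copies of a $cdh$ sheaf is again a $cdh$ sheaf. The only difference is in how the second step is justified: the paper simply observes that the $cdh$ topology on $\Sch/X$ is quasi-compact, so arbitrary direct sums of sheaves are sheaves, whereas you verify Voevodsky's square criterion directly using flatness of $\Z^{(I)}$; both are valid, the quasi-compactness argument being the more streamlined one (and it is what underlies Lemma~\ref{lem:directsumreplace} as well).
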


\begin{proof}
Since the question is local on $X$, we may replace $X$ by an open
subscheme to assume that $E$ is free.
Because the $cdh$-topology on $\Sch /X$ is quasi-compact, and
therefore arbitrary direct sums of sheaves are sheaves,
we are reduced to the trivial case $E=\cO_X$, when
$a_*\cF\oo_{\cO_X}E=a_*\cF$.
\end{proof}
%

\begin{defn}
If $\cF$ is a $cdh$ sheaf of $\cO_\cdh$-modules,
we will write $\cF\oozar E$ for the $cdh$ sheaf $a_*\cF\oo_{\cO_X}E$.
\end{defn}

Note that $\bbH_\zar^*(X,\cF\oozar E) \ne \bbH_\zar^*(X,\cF)\oo E(X)$.
For example, $E(X)=0$ does not imply that $(\cF\oozar E)(X)=0$.

\begin{lem}
If $E$ is locally free on $X$ 
then $\oozar E$
preserves weak equivalences and cofibrations 
for complexes of $cdh$ sheaves of $\cO_\cdh$-modules on $\Sch/X$.
\end{lem}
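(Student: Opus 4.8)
The plan is to treat the two claims separately: that $\oozar E$ preserves cofibrations is a consequence of flatness, while for weak equivalences I would reduce to the case that $E$ is free and then use quasi-compactness of the $cdh$-site.

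For the cofibration claim, recall that a cofibration of complexes of $\cO_\cdh$-sheaves is exactly a monomorphism (objectwise, an injection). Since $E$ is locally free it is flat as an $\cO$-module on $\Sch/X$ --- its stalks are free --- so $\oo_{\cO_X}E$ carries monomorphisms to monomorphisms, and $a_*$ is left exact; hence $\cF\mapsto\cF\oozar E=a_*\cF\oo_{\cO_X}E$ sends monomorphisms to monomorphisms. Therefore $\oozar E$ preserves cofibrations; note this step does not use that $E$ is free.

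For weak equivalences I would first observe that the statement is Zariski-local on the base $X$. If $\{U\to X\}$ is a Zariski cover, a map of $\cO_\cdh$-sheaves on $\Sch/X$ is an isomorphism as soon as all of its restrictions to the sites $\Sch/U$ are; applied to the cohomology sheaves $\mathcal{H}^n_\cdh$, this shows that a map of complexes is a weak equivalence as soon as its restrictions to the $\Sch/U$ are. Moreover $\oozar E$ commutes with restriction along an open immersion $U\into X$: $a_*$ does, the big-site sheaf $E$ restricts to the big-site sheaf attached to $E|_U$, and $\oo$ commutes with restriction. So I may replace $X$ by a member of a trivializing cover and assume $E=\bigoplus_{i\in I}\cO_X$. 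Then $\cF\oozar\cO_X=a_*\cF$, which is $\cF$ again because $a_*$ identifies $cdh$-sheaves with the Zariski sheaves satisfying $cdh$-descent (so $a^*a_*\cong\mathrm{id}$); hence $\cF\oozar E\cong\bigoplus_{i\in I}\cF$ naturally in $\cF$, and likewise for complexes. As in the proof of Lemma~\ref{EoF cdh}, the $cdh$-topology on $\Sch/X$ is quasi-compact, so arbitrary direct sums of $\cO_\cdh$-sheaves are sheaves and are computed objectwise; in particular $\bigoplus_{i\in I}$ is an exact endofunctor of $\cO_\cdh$-sheaves, hence commutes with the formation of the cohomology sheaves $\mathcal{H}^n_\cdh$ of complexes. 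A weak equivalence $C\to D$ is thus sent to $\bigoplus_I C\to\bigoplus_I D$, which on cohomology sheaves is $\bigoplus_I\mathcal{H}^n_\cdh(C)\to\bigoplus_I\mathcal{H}^n_\cdh(D)$, an isomorphism; so $\oozar E$ preserves weak equivalences.

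The one point that demands care is the reduction to the free case: one has to check both that a map of complexes of $cdh$-sheaves on $\Sch/X$ is a weak equivalence if it becomes one over a Zariski cover of $X$, and that $\oozar E$ is compatible with restriction to such a cover. With those in hand, everything else is formal --- flatness for cofibrations, and quasi-compactness of the $cdh$-site together with exactness of coproducts for weak equivalences.
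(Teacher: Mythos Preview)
Your argument is correct and follows essentially the same route as the paper's proof: reduce Zariski-locally on $X$ to the case where $E$ is free, and then observe that $A\oozar E$ is a direct sum of copies of $A$, so that both cofibrations and weak equivalences are preserved. The only minor deviation is that you handle cofibrations directly via flatness of $E$ rather than after the reduction to the free case; this is a harmless variation, and your more explicit justification of the Zariski-local reduction and of the exactness of $\bigoplus_I$ fills in details the paper leaves implicit.
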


\begin{proof}
As in the proof of Lemma \ref{EoF cdh},
we may replace $X$ by an open subscheme and
assume that $E$ is a sheaf of free modules.
Since $A\oozar E$ is a sum of copies of $A$, it follows that
$A\mapsto A\oozar E$ preserves weak equivalences and cofibrations.
\end{proof}

%


\begin{defn}\label{F_E(X)}
Given a cochain complex $A$ of presheaves of abelian groups on $\Sch$, we
write $F_A(X)$ for the homotopy fiber (the shifted mapping cone)
of the canonical map $A(X)\to \bbH_\cdh(X,A)$,
so  for each $X$ there is a long exact sequence
\begin{equation*}
\cdots\bbH_\cdh^{n-1}(X,A)\to H^{n} F_A(X)\to H^{n} A(X)\to
 \bbH_\cdh^{n}(X,A) \to H^{n+1} F_A(X) \cdots.
\end{equation*}
If $A$ is a complex of sheaves (in some topology) of $\cO$-modules, then $\bbH_\cdh(X,A)$ can be represented by a complex of sheaves of $\cO$-modules as well (see
\cite[8.1]{jardine-LHT}), and hence so can $F_A$.
We also write $F_K(X)$ for the homotopy fiber of $K(X)\to KH(X)$.
\end{defn}

It is well known that $HH$, $HC$ and $K$-theory satisfy
Zariski descent; it follows that $F_{HH}$, $F_{HC}$ and $F_K$
also satisfy Zariski descent.

\begin{prop}\label{commuteH/L}
If $\calL$ is an invertible sheaf on $X$ and $A$ is a
complex of Zariski sheaves of $\cO$-modules on $\Sch/X$, then:
\[
\bbH_\cdh(-,A)\oozar\calL \map{\simeq} \bbH_\cdh(-,A\oozar\calL).
\]
Consequently, $F_A\oozar\calL \map{\simeq} F_{A\oo\calL}$.
\end{prop}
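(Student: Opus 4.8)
The plan is to imitate the Zariski statement (Corollary \ref{Hzar-L}) but on the $cdh$ site, where $\oozar\calL$ is the operation $a^*(-)\oo_{\cO_X}\calL$ on $cdh$ sheaves. First I would establish the analogue of Lemma \ref{Hzar-autoeq}: namely that on the category $\Sh(\Ch(\cO_\cdh))/X$ of complexes of $cdh$ sheaves of $\cO_\cdh$-modules, the functor $\oozar\calL$ is an auto-equivalence with quasi-inverse $\oozar\calL^{-1}$, preserving cofibrations, fibrations and weak equivalences in the injective $cdh$-local model structure. The two technical lemmas already in the excrerpt (Lemma \ref{EoF cdh} and the unnumbered lemma that $\oozar E$ preserves weak equivalences and cofibrations for locally free $E$) do most of the work here: $\calL$ is locally free, so tensoring stays within $cdh$ sheaves and preserves weak equivalences and cofibrations; the identity $A\oozar\cO_X\cong A$ and the composite $\oozar\calL\circ\oozar\calL^{-1}\simeq \mathrm{id}$ give the auto-equivalence; and fibrations are handled exactly as in Lemma \ref{Hzar-autoeq}, by noting that a map is a $cdh$-local fibration iff it has the right lifting property against trivial cofibrations, and $\oozar\calL^{-1}$ carries trivial cofibrations to trivial cofibrations.

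Given that, the first displayed quasi-isomorphism follows formally: $\bbH_\cdh(-,A)$ is (represented by) a fibrant complex of $cdh$ sheaves of $\cO_\cdh$-modules receiving a weak equivalence from $a^*A$; applying the auto-equivalence $\oozar\calL$ preserves both properties, so $\bbH_\cdh(-,A)\oozar\calL$ is a fibrant complex receiving a weak equivalence from $(a^*A)\oozar\calL \simeq a^*(A\oozar\calL)$, and hence is canonically quasi-isomorphic to $\bbH_\cdh(-,A\oozar\calL)$. One must be slightly careful that ``$\oozar\calL$ on Zariski sheaves'' restricted along $a_*$ agrees with ``$\oozar\calL$ on $cdh$ sheaves'', i.e.\ that $a^*(A\oozar\calL)\cong (a^*A)\oozar\calL$ as $cdh$ sheaves; this is again local on $X$, hence reduces to the trivial free case, as in the proofs of Lemmas \ref{EoF cdh} and the one following it.

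For the ``consequently'' clause, recall $F_A(X)$ is the homotopy fiber of $A(X)\to\bbH_\cdh(X,A)$. Tensoring the map $A\to\bbH_\cdh(-,A)$ with $\calL$ over $\cO_X$ (an exact operation since $\calL$ is flat, so it commutes with taking homotopy fibers) gives that $F_A\oozar\calL$ is the homotopy fiber of $(A\oozar\calL)(-)\to (\bbH_\cdh(-,A)\oozar\calL)(-)$; now substitute the first displayed quasi-isomorphism to identify the target with $\bbH_\cdh(-,A\oozar\calL)$, whence $F_A\oozar\calL\simeq F_{A\oo\calL}$. Here I should use the notation $F_{A\oo\calL}$ for $F_{A\oozar\calL}$ as the paper does. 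The one place to watch is that $(A\oozar\calL)(X)$ is \emph{not} $A(X)\oo\calL(X)$ in general — this is exactly the warning remark after the definition of $\cF\oozar E$ — so the argument must be phrased at the level of presheaves of complexes (apply $\oozar\calL$ before evaluating at $X$), never by evaluating first and tensoring afterwards. I expect this bookkeeping — keeping the tensor on the big Zariski site of $X$ and only evaluating at the end — to be the only real subtlety; the model-categorical input is routine given the lemmas already proved.
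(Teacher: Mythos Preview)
Your proposal is correct and follows essentially the same route as the paper: both establish that $\oozar\calL$ preserves $cdh$-local fibrations by invoking Lemma~\ref{EoF cdh} together with the lifting argument from Lemma~\ref{Hzar-autoeq}, deduce the first quasi-isomorphism from this, and then obtain the second statement from exactness of $\oozar\calL$ applied to the defining triangle for $F_A$. Your write-up is more explicit about the compatibility $a^*(A\oozar\calL)\cong(a^*A)\oozar\calL$ and about the bookkeeping warning, but these are elaborations of the same argument rather than a different one.
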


\begin{proof}
Arguing as in the proof of Lemma \ref{Hzar-autoeq},
Lemma \ref{EoF cdh} shows that $\oozar \calL$ preserves $cdh$-local fibrations
(in addition to cofibrations and weak equivalences). The first statement
follows immediately from this.
Because $\oozar \calL$ is exact, the second statement
follows from the triangles
\[
F_A\to A\to \bbH_\cdh(-,A)\to \quad\text{and}\quad
F_{A\oo \calL} \to A\oo \calL \to \bbH_\cdh(-,A\oo \calL) \to.
\qedhere\]
\end{proof}

\begin{lem}\label{lem:directsumreplace}
let $A_i$ be cochain complexes of presheaves on $\Sch/X$.
Then for every $X$-scheme $Y$, the canonical maps
\[
\bigoplus\nolimits_i \bbH_\zar(Y,A_i)\to \bbH_\zar(Y, \bigoplus\nolimits_i A_i)
\]
and
\[
\bigoplus\nolimits_i \bbH_\cdh(Y,A_i)\to \bbH_\cdh(Y, \bigoplus\nolimits_i A_i)
\]
are quasi-isomorphisms.
\end{lem}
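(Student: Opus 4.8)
The plan is to reduce the two quasi-isomorphisms to a single statement about ordinary sheaf cohomology and then to exploit the quasi-compactness of the Zariski and $\cdh$ sites on $\Sch/X$. Fix an $X$-scheme $Y$, and let $\tau$ stand for either $\zar$ or $\cdh$. Since direct sums of cochain complexes of abelian groups are exact, the complex $\bigoplus_i\bbH_\tau(Y,A_i)$ has $n$-th cohomology $\bigoplus_i\bbH^n_\tau(Y,A_i)$; because a map of complexes is a quasi-isomorphism as soon as it induces isomorphisms on all cohomology groups, it is enough to show that
\[
\bigoplus\nolimits_i\bbH^n_\tau(Y,A_i)\lra\bbH^n_\tau\Bigl(Y,\bigoplus\nolimits_i A_i\Bigr)
\]
is an isomorphism for every integer $n$.

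The first step I would carry out is the case of a single $\tau$-sheaf concentrated in a single degree, namely that $H^p_\tau(Y,-)$ commutes with arbitrary direct sums of $\tau$-sheaves. Writing such a direct sum as the filtered colimit of its finite partial sums, and using that finite direct sums trivially commute with cohomology, this reduces to the statement that $H^p_\tau(Y,-)$ commutes with filtered colimits of sheaves. Both sites here are quasi-compact---for $\cdh$ this is the fact recalled in the proof of Lemma \ref{EoF cdh}, and the big Zariski site on $\Sch/X$ is quasi-compact for the same reason---so every object is coherent and cohomology commutes with filtered colimits by the standard argument (SGA~4, Exp.~VI).

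The second step is to bootstrap from single sheaves to arbitrary complexes. Here I would use that $H^p_\tau(Y,-)$ vanishes for $p>\dim Y$ (Grothendieck vanishing for $\zar$, and the analogous bound for $\cdh$): for a fixed $n$ the canonical truncation map $\tau_{\ge q}B\to B$ then induces an isomorphism on $\bbH^n_\tau(Y,-)$ for all $q$ sufficiently negative (depending only on $n$ and $\dim Y$), so, since canonical truncation commutes with direct sums, I may replace all the $A_i$ by their common truncations and assume every $A_i$ is bounded below. For bounded-below complexes the hypercohomology spectral sequence
\[
E_2^{p,q}=H^p_\tau\bigl(Y,\mathcal H^q_\tau(B)\bigr)\Longrightarrow\bbH^{p+q}_\tau(Y,B),
\]
with $\mathcal H^q_\tau$ the $\tau$-sheafification of the cohomology presheaf $H^q(B)$, converges with a finite filtration in each total degree. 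Sheafification is a left adjoint, hence commutes with direct sums, so by the first step the $E_2$-page of the spectral sequence for $\bigoplus_i A_i$ is the direct sum of the $E_2$-pages for the $A_i$, compatibly with all differentials; the same identity then propagates to $E_\infty$ and hence to $\bbH^n_\tau(Y,-)$, which is what is wanted. The $\zar$ and $\cdh$ cases run in parallel throughout.

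The one genuine obstacle is the passage from bounded-below to arbitrary complexes in the second step: this is exactly what the finite cohomological dimension of the two sites buys us, since it forces the tower $\{\bbH^n_\tau(Y,\tau_{\ge q}B)\}_q$ to stabilize, so that no $\varprojlim^1$ term can intervene. Everything else is formal homological algebra.
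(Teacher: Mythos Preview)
Your argument is correct and rests on the same core fact the paper invokes: both the Zariski and $\cdh$ sites on $\Sch/X$ are quasi-compact, so cohomology commutes with filtered colimits (and in particular with direct sums). The paper's proof is literally the single sentence ``These sites are quasi-compact, and thus cohomology in them commutes with direct limits,'' leaving the passage from ordinary sheaf cohomology to hypercohomology of unbounded complexes implicit.

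What you do beyond the paper is make that passage explicit: you reduce to bounded-below complexes using the finite $\tau$-cohomological dimension of $Y$ (Grothendieck vanishing for $\zar$, the analogous bound for $\cdh$), and then run the hypercohomology spectral sequence, observing that sheafification and the $E_2$-page commute with direct sums. This is a genuine addition of content---for an arbitrary quasi-compact site the step from ``$H^p$ commutes with filtered colimits'' to ``$\bbH^n$ of unbounded complexes commutes with filtered colimits'' is not automatic, and your use of finite cohomological dimension is exactly what closes that gap here. So your proof is a careful unpacking of what the paper asserts in one line; the approaches are the same in spirit, with yours supplying the homological-algebra details.
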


\begin{proof}
These sites are quasi-compact, and thus
cohomology in them commutes with direct limits.
\end{proof}

\smallskip
\section{Homology of line bundles}\label{sec:HH(R[L])}

Suppose that $R$ is a (commutative) noetherian algebra
over a field $k$ of characteristic~0.
In \cite[3.2, 4.1]{chww-fibrant}, we showed that $NK(R)=K(R[t])/K(R)$
is isomorphic to $NF_{HC/\Q}(R)[1]$ as well as
$\F_{HH/\Q}(R)[1]\oo_R tR[t]$. In this section, we
replace $R[t]$ by the symmetric algebra $R[L]=\Sym_R(L)$ of a
rank 1 projective $R$-module, and the ideal $tR[t]$ by $LR[L]$.
More generally, if $\calL$ is an
invertible sheaf on a scheme $X$, we replace $X\times\A^1$ by
the line bundle $\bbL=\Spec(\Sym_X \calL)$.

\begin{lem}\label{aff:Kunneth}
Let $L$ be a rank 1 projective $R$-module. Then the symmetric algebra
$R[L]=\Sym_R(L)$ satisfies:
\begin{align*}
HH(R[L]) \simeq&\; HH(R)\oo_RR[L]  \;\oplus\; HH(R)[1] \oo_R LR[L] \\
HC(R[L]) \simeq&\; HC(R) \;\oplus\; HH(R)\oo_R LR[L]. 
\end{align*}
Sinilarly, if $X$ is a scheme over $R$ and $X[L]$ denotes
$X\times_R\Spec(R[L])$, then
\begin{align*}
HH(X[L]) \simeq&\; HH(X)\oo_RR[L]  \;\oplus\; HH(X)[1]\oo_R LR[L]) \\
HC(X[L]) \simeq&\; HC(X) \;\oplus\; HH(X)\oo_R LR[L].  
\end{align*}
\end{lem}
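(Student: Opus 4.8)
The plan is to reduce everything to the affine (more precisely, $R$-algebra) computation of Hochschild and cyclic homology of a symmetric algebra on a rank one module, and then globalize using Zariski descent. The starting point is the classical fact that for a polynomial ring $R[t]$ one has the HKR-type decompositions $HH(R[t])\simeq HH(R)\oo_R R[t]\oplus HH(R)[1]\oo_R tR[t]$ and $HC(R[t])\simeq HC(R)\oplus HH(R)\oo_R tR[t]$; these are exactly the formulas recalled from \cite[3.2, 4.1]{chww-fibrant} in the text just before the lemma. First I would establish the first pair of displays (the affine case with $L$ a rank one projective $R$-module). Locally on $\Spec(R)$ the module $L$ is free of rank one, so $R[L]$ is locally isomorphic to $R[t]$ and the polynomial-ring formulas apply on a Zariski cover; the point is that the decomposition is natural enough to glue. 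The cleanest way to see naturality is to note that the summand $HH(R)\oo_R R[L]$ is the image of the unit map $HH(R)\to HH(R[L])$ induced by $R\to R[L]$, and the complementary summand is detected by the weight (grading) coming from the $\mathbb{G}_m$-action on $R[L]$ (equivalently, the grading of the symmetric algebra), under which $HH$ and $HC$ decompose into weight pieces; the weight zero piece gives $HH(R)$ resp. $HC(R)$, and the higher weight pieces assemble to $HH(R)[1]\oo_R LR[L]$ resp. $HH(R)\oo_R LR[L]$. Since the grading is intrinsic and functorial, the local isomorphisms agree on overlaps and glue to the asserted global decomposition over $\Spec(R)$.

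Next I would pass from $R$ to an arbitrary $R$-scheme $X$ and the base change $X[L]=X\times_R\Spec(R[L])$. Here $HH(X/k)$ and $HC(X/k)$ are defined as the Zariski hypercohomology of the presheaves $HH$, $HC$ (Example \ref{HH descent}), and both satisfy Zariski descent; moreover $LR[L]$ is a locally free, in fact locally trivial, sheaf on $X[L]$ pulled back from $\Spec(R)$. The strategy is to prove the decomposition at the level of the presheaves of complexes on the Zariski site of $X[L]$: on any affine open $\Spec(A)\subset X$ with $\Spec(A[L])\subset X[L]$ the affine case just proved gives $HH(A[L])\simeq HH(A)\oo_A A[L]\oplus HH(A)[1]\oo_A LA[L]$, naturally in $A$, and these patch to an equivalence of presheaves of complexes. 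Applying Zariski hypercohomology and using that it commutes with the relevant tensors — this is exactly the content of Lemma \ref{Hzar-autoeq} and Corollary \ref{Hzar-L}, which say that $\oozar\calL$ for $\calL$ invertible, and more generally $\oozar E$ for $E$ locally free, is compatible with $\bbH_\zar(-,-)$ — yields the global formula $HH(X[L])\simeq HH(X)\oo_R R[L]\oplus HH(X)[1]\oo_R LR[L]$, and likewise for $HC$. (Strictly, one wants the tensor factors like $R[L]$ and $LR[L]$ interpreted as the appropriate quasi-coherent sheaves on $X[L]$, or equivalently one writes $HH(X)\oo_R R[L]$ for $\mathbb{R}\Gamma$ of the pullback; the bookkeeping is routine given the descent statements already in Section 1.)

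The main obstacle, and the step deserving the most care, is the naturality and gluing of the affine HKR-type splitting: the isomorphisms $HH(R[t])\simeq HH(R)\oo_R R[t]\oplus HH(R)[1]\oo_R tR[t]$ as usually written depend a priori on the chosen variable $t$, and what is needed is a canonical decomposition that survives the change of trivialization $t\mapsto ut$ for a unit $u$, so that it glues over a cover of $\Spec(R)$ where $L$ is trivialized. I expect this to come down to checking that the splitting is the one induced by the internal grading (weight) on $\Sym_R(L)$ together with the canonical augmentation $R[L]\onto R$, both of which are manifestly independent of any trivialization; granting that, everything else is formal — Zariski descent for $HH$ and $HC$, the behavior of $\oozar$ under hypercohomology from Section 1, and the quasi-compactness/direct-sum statement of Lemma \ref{lem:directsumreplace} to handle the (possibly infinite) sum $LR[L]=\bigoplus_{n\ge1}L^{\oo n}$. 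So the proof is: (1) the graded/augmented splitting in the affine polynomial case, (2) twist by a line to get the rank one projective case, (3) sheafify and apply Zariski descent together with Corollary \ref{Hzar-L} to globalize to $X[L]$.
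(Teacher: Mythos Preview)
Your overall plan---reduce to the polynomial case and globalize via Zariski descent---matches the paper's, but the paper handles the crucial naturality step differently and more cleanly. Instead of gluing local isomorphisms and arguing that the $\mathbb{G}_m$-weight decomposition makes them compatible, the paper writes down a single natural map
\[
\mu(R):\; HH(R)\oo_R R[L]\ \oplus\ HH(R)[1]\oo_R(R[L]\oo L)\ \longrightarrow\ HH(R[L])
\]
using the shuffle product: the bottom of the Hochschild complex gives natural maps $R[L]\to HH(R[L])$ (degree $0$) and $R[L]\oo L\to HH(R[L])$ (degree $1$), and shuffling with $HH(R)$ produces $\mu(R)$. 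One then checks $\mu$ is a quasi-isomorphism by localizing at primes of $R$, where $R_\wp[L]\cong R_\wp[t]$ and $\mu(R_\wp)$ is the classical K\"unneth isomorphism. This sidesteps any gluing. Your grading argument can be made to work, but your description of it is not quite right: the summand $HH(R)\oo_R R[L]$ is \emph{not} the image of the unit map $HH(R)\to HH(R[L])$ (that image sits only in weight $0$), nor is either summand a union of weight pieces---each weight $n\ge1$ contains both an $HH(R)\oo L^{\oo n}$ and an $HH(R)[1]\oo L^{\oo n}$ contribution. What the grading buys you is only that the polynomial-case isomorphism is equivariant for $t\mapsto ut$, hence independent of trivialization; you would still need to say what the isomorphism is before invoking that.

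For $HC$, the paper does not re-run a separate argument but deduces the formula from the $HH$ formula by induction along the SBI sequence, exactly as for $HC(R[t])$; you pass over this. For the scheme case, the paper does essentially what you propose: it applies the same shuffle-product map to $\pi_*HH(\cO_X[L])$, uses that $\pi:X[L]\to X$ is affine to identify $\bbH_\zar(X[L],HH)$ with $\bbH_\zar(X,\pi_*HH)$, and then invokes Corollary~\ref{Hzar-L} and Lemma~\ref{lem:directsumreplace} to handle the tensor with the invertible sheaves $\calL^{\oo j}$ and the infinite direct sum.
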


Note that, as an $R$-module, $LR[L] = R[L]\oo_R L$ is just
$\oplus_{j=1}^\infty L^{\oo j}$.

\begin{proof}
The cochain complex $HH(R[L])$ ends:
$\to R[L]\oo R[L] \smap{0} R[L]\to0$.
Therefore there are natural maps from $R[L]$ and $R[L]\oo L[1]$ to $HH(R[L])$.
Using the shuffle product, we get a natural map $\mu(R)$ from the direct sum of
$HH(R)\oo_R R[L]$ and $HH(R)\oo_R(R[L]\oo L)[1]$ to $HH(R[L])$.
For each prime ideal $\wp$ of $R$, we have $R_\wp[L]\cong R_\wp[t]$
and $\mu(R_\wp)$ is a quasi-isomorphism by the K\"unneth formula
\cite[9.4.1]{WH}. It follows that $\mu(R)$ is a quasi-isomorphism.
The formula for $HC(R[L])$ follows by induction on the SBI sequence,
just as it does for $HC(R[t])$.

If $X$ is a scheme over $R$, the same argument applies to
$\pi_* HH(\cO_X [L])$, the direct image along 
$X[L]\smap{\pi} X$
of the cochain complex $HH(\cO_X [L])$ on $X[L]$
of quasi-coherent sheaves described in Example \ref{HH descent}.
Because $\pi$ is affine, we have a quasi-isomorphism
$$\bbH_\zar(X[L],HH(\cO_X [L]))\cong \bbH_\zar(X,\pi_* HH(\cO_X [L]).$$
Now the assertions about $X[L]$ follow from
Corollary \ref{Hzar-L} and Lemma \ref{lem:directsumreplace}.
\end{proof}



\begin{cor}\label{cor:Kunneth}
$F_{HC}(R[L]) \cong F_{HC}(R) \;\oplus\;
\bigoplus\nolimits_{j=1}^\infty \left(F_{HH}\oo_R L^{\oo j}\right)(R).$
\end{cor}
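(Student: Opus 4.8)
The plan is to deduce the Corollary from Lemma~\ref{aff:Kunneth}, Proposition~\ref{commuteH/L} and Lemma~\ref{lem:directsumreplace}, by applying the homotopy-fiber construction $F_{(-)}$ to the K\"unneth splitting of $HC$ along the line bundle $\pi\colon\bbL=\Spec R[L]\to\Spec R$. Write $\calL$ for the invertible sheaf on the big site $\Sch/\Spec R$ associated to $L$, and let $HC_\bbL$ denote the presheaf of complexes of $\cO$-modules on $\Sch/\Spec R$ with $HC_\bbL(Y)=HC(Y\times_{\Spec R}\bbL)$. First I would observe that all the constructions in the proof of Lemma~\ref{aff:Kunneth} --- the maps from $R[L]$ and $(R[L]\oo_R L)[1]$ into $HH(R[L])$, the shuffle products building $\mu$, and the SBI induction passing from $HH$ to $HC$ --- are natural in $R$, so that the lemma upgrades to a map of complexes of presheaves of $\cO$-modules on $\Sch/\Spec R$,
\[
HC_\bbL\ \lra\ HC\ \oplus\ \bigoplus_{j\ge1}\bigl(HH\oozar\calL^{\oo j}\bigr).
\]
This map is a $cdh$-local weak equivalence --- it is one already Zariski-locally, where $\bbL$ becomes an affine line and the K\"unneth formula \cite[9.4.1]{WH} applies verbatim --- and it is, moreover, an honest quasi-isomorphism after evaluation at $\Spec R$, where it recovers the splitting of $HC(R[L])$ in Lemma~\ref{aff:Kunneth} (using $(HH\oozar\calL^{\oo j})(\Spec R)=HH(R)\oo_RL^{\oo j}$, valid since $HH$ is a complex of quasi-coherent sheaves and $\Spec R$ is affine).

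Next I would apply $cdh$-fibrant replacement: it preserves quasi-isomorphisms, it commutes with the infinite direct sum by Lemma~\ref{lem:directsumreplace}, and $\bbH_\cdh(-,HH\oozar\calL^{\oo j})\simeq\bbH_\cdh(-,HH)\oozar\calL^{\oo j}$ by Proposition~\ref{commuteH/L}; this produces the matching splitting of $\bbH_\cdh(-,HC_\bbL)$. Taking the homotopy fiber of $HC_\bbL\to\bbH_\cdh(-,HC_\bbL)$ --- using that homotopy fibers commute with direct sums, and the second assertion of Proposition~\ref{commuteH/L} to rewrite $F_{HH}\oozar\calL^{\oo j}\simeq F_{HH\oo\calL^{\oo j}}$ --- and then evaluating at $\Spec R$ yields
\[
\mathrm{hofib}\bigl(HC(\bbL)\lra\bbH_\cdh(\Spec R,HC_\bbL)\bigr)\ \simeq\ F_{HC}(R)\ \oplus\ \bigoplus_{j\ge1}\bigl(F_{HH}\oo_RL^{\oo j}\bigr)(R),
\]
the right-hand side being read off via the big-site sheaf $\calL^{\oo j}$ and the Zariski descent of $F_{HH}$ recorded just before Proposition~\ref{commuteH/L}.

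The step I expect to be the main obstacle is identifying the left-hand side with $F_{HC}(\bbL)$ --- equivalently, that $\bbH_\cdh(\Spec R,HC_\bbL)\simeq\bbH_\cdh(\bbL,HC)$, i.e.\ that $cdh$-hypercohomology commutes with direct image along the affine morphism $\pi$ for the complex $HC$. I would prove this as the analogous statement for $\Spec R\times\A^1$ is handled in \cite{chww-fibrant}: the presheaf $Y\mapsto\bbH_\cdh(Y\times_{\Spec R}\bbL,HC)$ on $\Sch/\Spec R$ already satisfies $cdh$-descent, because abstract blow-up squares and $cdh$-covers over $Y$ base-change to abstract blow-up squares and $cdh$-covers over $Y\times_{\Spec R}\bbL$; and the canonical map to it from $HC_\bbL$ is a $cdh$-local weak equivalence, because $\pi$ is affine and $HC$ is a complex of quasi-coherent sheaves, so no higher direct images obstruct it --- a point one can check Zariski-locally on $\Spec R$, where $\bbL$ is the trivial bundle $\A^1_U\to U$ and the required input is exactly the $cdh$-descent package of Section~1 and of \cite{chww-fibrant}. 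Evaluating that $cdh$-fibrant presheaf at $\Spec R$ gives $\bbH_\cdh(\bbL,HC)$, and the Corollary follows; everything else is bookkeeping with the model structures of Section~1.
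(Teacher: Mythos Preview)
Your proposal is correct and follows essentially the same approach as the paper, which cites exactly the same three inputs (Lemma~\ref{aff:Kunneth}, Proposition~\ref{commuteH/L}, and Lemma~\ref{lem:directsumreplace}) and nothing more. You have moreover correctly isolated and handled the one point the paper's one-line proof leaves implicit, namely the identification $\bbH_\cdh(\Spec R,\pi_*HC)\simeq\bbH_\cdh(\bbL,HC)$; your argument via base-change of $cdh$-covers along $\pi$ is the right one, though the clause about ``no higher direct images'' is more apt for the Zariski case and is not really what is doing the work here.
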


\begin{proof}
This follows from Lemma \ref{aff:Kunneth}, Proposition \ref{commuteH/L},
and Lemma \ref{lem:directsumreplace}.
\end{proof}



Now suppose that $X$ is a scheme of finite type over a field
of characteristic~0, containing $k$,
and write $HH$, $HC$, etc for $HH/k$, $HC/k$, etc.

\begin{lem}\label{descent}
Let $\bbL$ be a line bundle over $X$, and write $\cF_{HH}$ for the cochain
complex of Zariski sheaves on $X$ associated to the complex of presheaves
$U\mapsto F_{HH}(\bbL|_U)$.
Then $F_{HH}(\bbL) \smap{\simeq} \bbH_\zar(X,\cF_{HH})$.
\end{lem}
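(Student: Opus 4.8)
The plan is to prove the stronger assertion that the presheaf of complexes $P\colon U\mapsto F_{HH}(\bbL|_U)$ on the small Zariski site of $X$ already satisfies Zariski descent; here $U$ ranges over the open subschemes of $X$ and $\bbL|_U=\bbL\times_XU=\pi^{-1}(U)$, for $\pi\colon\bbL\to X$ the bundle projection. Granting this, the lemma follows formally: by definition $\cF_{HH}$ is the sheafification of $P$, so the canonical map $P\to\cF_{HH}$ is a Zariski-local weak equivalence and $P$ and $\cF_{HH}$ have the same fibrant replacement; descent for $P$ then says that $P\smap{\simeq}\bbH_\zar(-,\cF_{HH})$ is an objectwise quasi-isomorphism, and evaluating at $U=X$ gives $F_{HH}(\bbL)\smap{\simeq}\bbH_\zar(X,\cF_{HH})$.

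To check that $P$ satisfies Zariski descent I would use the Brown--Gersten criterion: since $X$ is Noetherian of finite Krull dimension (being of finite type over a field), it is enough to verify that $P(\emptyset)\simeq0$ and that $P$ carries each elementary distinguished square of $X_\zar$, i.e.\ each open cover $U=V\cup W$, to a homotopy cartesian square. The first condition is clear. For the second, note that $\bbL|_{(-)}=\pi^{-1}(-)$ commutes with finite unions and with intersections of open subschemes, so $\bbL|_{V\cup W}=\bbL|_V\cup\bbL|_W$ and $\bbL|_V\cap\bbL|_W=\bbL|_{V\cap W}$. Hence the square
\[
\begin{array}{ccc}
P(V\cup W) & \longrightarrow & P(V)\\
\downarrow & & \downarrow\\
P(W) & \longrightarrow & P(V\cap W)
\end{array}
\]
is precisely the Mayer--Vietoris square of $F_{HH}$ for the Zariski cover $\{\bbL|_V,\bbL|_W\}$ of the $k$-scheme $\bbL|_{V\cup W}$, and this is homotopy cartesian because $F_{HH}$ satisfies Zariski descent on $\Sch$ (as recorded just before Proposition \ref{commuteH/L}; this in turn follows from Zariski descent for $HH$ together with the fact that $\bbH_\cdh(-,HH)$ satisfies $cdh$-descent, hence Zariski descent).

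Since the argument is a formal consequence of descent for $F_{HH}$ and the trivial identities $\bbL|_{V\cup W}=\bbL|_V\cup\bbL|_W$ and $\bbL|_V\cap\bbL|_W=\bbL|_{V\cap W}$, I do not expect a serious obstacle. The one point that needs care is the bookkeeping that converts ``$P$ satisfies descent'' into the stated quasi-isomorphism: one must identify $\bbH_\zar(X,\cF_{HH})$, which is the value at $X$ of a fibrant replacement on the small Zariski site of $X$ of the sheaf complex $\cF_{HH}$, with the fibrant replacement of the presheaf $P$ (valid because sheafification is a Zariski-local weak equivalence), and one must know that the Brown--Gersten reduction applies to possibly unbounded presheaves of complexes on a Noetherian finite-dimensional scheme, which is standard. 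Alternatively, and with the same ingredients, one can argue via the Leray spectral sequence for $\pi$: $\bbH_\zar(\bbL,F_{HH})\cong\bbH_\zar(X,R\pi_*\,F_{HH})$, and descent for $F_{HH}$ identifies the cohomology sheaves of $R\pi_*\,F_{HH}$ with those of $\cF_{HH}$.
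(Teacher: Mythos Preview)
Your argument is correct. The paper's proof is shorter but uses the same essential input (Zariski descent for $F_{HH}$) packaged differently: it first invokes descent on $\bbL$ itself to get $F_{HH}(\bbL)\simeq\bbH_\zar(\bbL,F_{HH})$, and then cites Thomason \cite[1.56]{AKTEC} for the change-of-site identification $\bbH_\zar(\bbL,F_{HH})\simeq\bbH_\zar(X,\cF_{HH})$. This is exactly the Leray-type alternative you sketch in your last sentence. Your main route instead verifies descent for the pushed-forward presheaf $P$ on $X_\zar$ directly via the Brown--Gersten criterion, reducing to Mayer--Vietoris for $F_{HH}$ on the induced cover of $\bbL$. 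The trade-off is that the paper's version is a two-line citation, while yours is self-contained and makes explicit why the line-bundle structure is irrelevant here (only that $\pi^{-1}$ preserves open covers matters).
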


\begin{proof}
As observed after \ref{F_E(X)}, the presheaf of complexes $F_{HH}$
satisfies Zariski descent: $F_{HH}(\bbL)\!\simeq\!\bbH_\zar(\bbL,F_{HH})$.
By \cite[1.56]{AKTEC},  $\bbH_\zar(\bbL,F_{HH})\smap{\simeq}
\bbH_\zar(X,\cF_{HH})$.
\end{proof}

In what follows, we write $\oo$ for the tensor product of $\cO_X$-modules.

\begin{prop}\label{Kunneth}
Let $\bbL$ be the line bundle $\Spec(\Sym\,\calL)$ on $X$ associated to
an invertible sheaf $\calL$, and $p:\bbL\to X$ the projection.
Then we have quasi-isomorphisms:
\begin{align*}
HC(\bbL) \simeq\ & HC(X) \oplus \bbH_\zar(X,HH\oo\Sym(\calL)\oo\calL);
\\
\bbH_\cdh(X,p_*HC) \simeq\ &
\bbH_\cdh(X,HC)\oplus \bbH_\cdh(X,HH\oo\Sym(\calL)\oo\calL);
\\
F_{HC}(\bbL) \simeq\ &
F_{HC}(X)  \;\oplus\;
\bigoplus\nolimits_{j=1}^\infty\left(F_{HH}\oo\calL^{\oo j}\right)(X);
\\
K(\bbL,X) \simeq\ & F_{HC}(\bbL,X)[1].
\end{align*}
\end{prop}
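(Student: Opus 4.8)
The plan is to prove the four quasi-isomorphisms of Proposition \ref{Kunneth} in the order listed, bootstrapping from the affine K\"unneth computations of Lemma \ref{aff:Kunneth} and Corollary \ref{cor:Kunneth} via Zariski descent, and then pass from cyclic homology to $K$-theory using the Goodwillie--Cortas-Haesemeyer-Schlichting-Weibel comparison.

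First I would establish the formula for $HC(\bbL)$. Working locally on $X$, choose an open $U$ on which $\calL$ is trivial, so $\bbL|_U \cong U\times\A^1$; on such a $U$ the second formula of Lemma \ref{aff:Kunneth} (the scheme version, applied with $R=k$) gives $HC(\bbL|_U)\simeq HC(U)\oplus\bbH_\zar(U,HH\oo\Sym(\calL)\oo\calL)$, where I have rewritten $LR[L]$ as $\Sym(\calL)\oo\calL = \bigoplus_{j\ge1}\calL^{\oo j}$. Since $p$ is an affine morphism, $HC(\bbL)\simeq\bbH_\zar(X,p_*HH(\cO_\bbL))$-style reasoning (as in the proof of Lemma \ref{aff:Kunneth}) lets me globalize: the local splittings are induced by the shuffle product and hence are natural, so they glue to a splitting of sheaves on $X$, and taking $\bbH_\zar(X,-)$ gives the first formula. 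Here I use Corollary \ref{Hzar-L} to commute $\bbH_\zar$ past the twist by $\calL$ and Lemma \ref{lem:directsumreplace} to commute $\bbH_\zar$ past the infinite direct sum $\Sym(\calL)$.

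Next, the same local computation applies verbatim to the $cdh$-sheafified complex: the presheaf $U\mapsto \bbH_\cdh(\bbL|_U,HC)$ decomposes locally because $\bbH_\cdh$ of an honest product $U\times\A^1$ decomposes (apply $\bbH_\cdh(-,-)$ to the local version of Lemma \ref{aff:Kunneth}, using that $\oozar\calL$ preserves $cdh$-local weak equivalences by the Lemma after \ref{EoF cdh}), and Proposition \ref{commuteH/L} lets me move the twist through $\bbH_\cdh$. Sheafifying over $X$ and applying $\bbH_\zar(X,-)$ gives the second formula (after identifying $\bbH_\zar(X,\bbH_\cdh(-,?))$ with $\bbH_\cdh(X,?)$, which holds because $\bbH_\cdh(-,?)$ already satisfies Zariski descent). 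The third formula is then formal: $F_{HC}(\bbL)$ is the homotopy fiber of the first arrow over the second, so taking the homotopy fiber of the splitting in the first formula over the splitting in the second, and invoking Proposition \ref{commuteH/L} (in the form $F_A\oozar\calL\simeq F_{A\oo\calL}$) together with Lemma \ref{descent} and Lemma \ref{lem:directsumreplace} to rewrite $\bigl(\bigoplus_j F_{HH}\oo\calL^{\oo j}\bigr)(\bbL)$ — well, really to rewrite $F_{HH}(\bbL)\oo(\text{twist})$ — as $\bigoplus_{j\ge1}(F_{HH}\oo\calL^{\oo j})(X)$, gives exactly $F_{HC}(X)\oplus\bigoplus_{j\ge1}(F_{HH}\oo\calL^{\oo j})(X)$. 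This is the direct globalization of Corollary \ref{cor:Kunneth}, and should present no new difficulty.

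The last line, $K(\bbL,X)\simeq F_{HC}(\bbL,X)[1]$, is where the real input enters. Here $K(\bbL,X)$ denotes the homotopy fiber of $K(X)\to K(\bbL)$ and $F_{HC}(\bbL,X)$ the homotopy fiber of $F_{HC}(X)\to F_{HC}(\bbL)$. The plan is to invoke the main theorem of \cite{chww-fibrant} (the fibrant version of the Cortas--Haesemeyer--Schlichting--Weibel cdh-descent theorem): for schemes of finite type over a field of characteristic~$0$, the homotopy fiber $F_K$ of $K\to KH$ is equivalent to $F_{HC/\Q}[1]$ as presheaves of spectra. Since $KH$ is $\A^1$-invariant and, more to the point, invariant under passage to the associated vector bundle — $KH(\bbL)\simeq KH(X)$ because $\bbL\to X$ has $\A^1$-fibers and $KH$ is homotopy invariant — the square relating $K$ and $KH$ for $X$ and for $\bbL$ shows $K(\bbL,X)\simeq F_K(\bbL,X)$, and then the \cite{chww-fibrant} equivalence gives $F_K(\bbL,X)\simeq F_{HC}(\bbL,X)[1]$. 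The one point needing care, and the main obstacle, is making sure all the comparisons are natural enough in $X$ to be compatible with the projection $p:\bbL\to X$ — i.e.\ that the equivalence $F_K\simeq F_{HC}[1]$ is a map of presheaves on all of $\Sch/k$, not just a pointwise statement — so that one may legitimately form homotopy fibers along $p$ on both sides; this naturality is exactly what the "fibrant" refinement in \cite{chww-fibrant} is designed to provide, so citing it in that form should close the argument.
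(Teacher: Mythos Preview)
Your proposal is correct and follows essentially the same route as the paper: reduce to the affine case via Zariski descent to invoke Lemma~\ref{aff:Kunneth} and Corollary~\ref{cor:Kunneth} for the first three formulas, and then use $KH(\bbL)\simeq KH(X)$ together with the equivalence $F_K\simeq F_{HC/\Q}[1]$ for the fourth. The only cosmetic difference is that the paper cites \cite[Theorem~1.6]{chw-v} rather than \cite{chww-fibrant} for that last equivalence, and compresses your gluing discussion into the single phrase ``using Zariski descent, we may assume that $X=\Spec(R)$.''
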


\begin{proof}
 Using Zariski descent, we may assume that $X=\Spec(R)$ for some $R$.
The first two quasi-isomorphisms are immediate from
Lemma \ref{aff:Kunneth}, while the third is immediate from
Corollary \ref{cor:Kunneth}.
By Theorem 1.6 of \cite{chw-v},
\[
K(\bbL)/K(X)\cong F_K(\bbL)/F_K(X) \simeq
F_{HC/\Q}(\bbL)[1]/F_{HC/\Q}(X)[1].
\]
Now use
the formula for $F_{HC}(\bbL)$
to get the final quasi-isomorphism.
\end{proof}

Now suppose that $R$ is a commutative $\Q$-algebra.  Then
$K_n(R[L],R)$ is a $\Q$-module \cite{W-mod}, and
the Adams operations give an $R$-module decomposition
\[
K_n(R[L],R)\cong\oplus_{i=0}^\infty K_n^{(i)}(R[L],R)
\]
with $K_n^{(0)}(R[L],R)=0$ for all $n$. The relative terms
$F_K(R)\cong F_{HC}(R)[1]$ have a similar decomposition, and
$F_K^{(i)}(R[L],R) \simeq F_{HC}^{(i-1)}(R[L],R)[1]$.

As in \cite[5.1]{chww-fibrant}, we define the {\it typical piece}
$TK_n(R)$ to be $H^{1-n}(F_{HH}(R))$, and set
$TK_n^{(i)}(R)=H^{1-n}(F_{HH}^{(i-1)}(R))$.
Since these groups were detemined in \cite{chww-fibrant},
we may rephrase the last part of Proposition \ref{Kunneth} as follows:

\begin{cor}\label{TK(RL)}
If $R$ is a commutative $\Q$-algebra,
$K_n(R[L],R)\cong TK_n(R)\oo_R LR[L]$ and
\[
K_n^{(i)}(R[L]) \cong K_n^{(i)}(R) \oplus TK_n^{(i)}(R)\oo_R LR[L].
\]
Moreover,
\[
TK_n^{(i)}(R) \cong\begin{cases}
HH_{n-1}^{(i-1)}(R), &\text{if }~ i<n,\\
H_\cdh^{i-n-1}(R,\Omega^{i-1}),& \text{if }~ i\ge n+2.
\end{cases}\]
\end{cor}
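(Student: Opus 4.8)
The plan is to deduce Corollary \ref{TK(RL)} from Proposition \ref{Kunneth} together with the identification of $F_{HH}$ and its Adams eigenspaces carried out in \cite{chww-fibrant}. First I would unwind the definitions: by construction $TK_n(R) = H^{1-n}F_{HH}(R)$ and $F_K(R)\simeq F_{HC}(R)[1]$, so applying $H^{-n}(-)$ to the splitting $F_{HC}(R[L])\simeq F_{HC}(R)\oplus\bigoplus_{j\ge 1}(F_{HH}\oo_R L^{\oo j})(R)$ from Corollary \ref{cor:Kunneth} (equivalently, the affine case of Proposition \ref{Kunneth}), and then shifting by $[1]$, gives
\[
K_n(R[L],R)\cong H^{-n}\Bigl(\bigoplus_{j\ge 1}(F_{HH}\oo_R L^{\oo j})(R)\Bigr)[1]
= \bigoplus_{j\ge 1}H^{1-n}(F_{HH}\oo_R L^{\oo j})(R).
\]
Since $L$ is a rank-$1$ projective (hence flat, and Zariski-locally trivial on $R$), tensoring commutes with the formation of $F_{HH}$ and with cohomology, so $H^{1-n}(F_{HH}\oo_R L^{\oo j})(R)\cong TK_n(R)\oo_R L^{\oo j}$; summing over $j\ge 1$ and recalling that $LR[L]=\bigoplus_{j\ge 1}L^{\oo j}$ as an $R$-module gives the first formula. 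The Adams-graded version is identical eigenspace by eigenspace, using $F_K^{(i)}(R[L],R)\simeq F_{HC}^{(i-1)}(R[L],R)[1]$ and the definition $TK_n^{(i)}(R)=H^{1-n}F_{HH}^{(i-1)}(R)$; one must only check that the Adams operations are compatible with the $\oozar L^{\oo j}$ summand decomposition, which holds because $L$ sits in weight $1$ and the splitting of Corollary \ref{cor:Kunneth} is natural.

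The only genuinely substantive point is the last displayed formula for $TK_n^{(i)}(R)$, and here I would simply invoke \cite{chww-fibrant}. The typical piece $TK^{(i)}_n(R)=H^{1-n}F_{HH}^{(i-1)}(R)$ is, by definition of $F_{HH}$, computed by the long exact sequence relating $HH^{(i-1)}(R)$ and $\bbH_\cdh(R,HH^{(i-1)})$. In the range $i<n$ the relevant $cdh$-hypercohomology term vanishes for degree reasons (the $cdh$-cohomological dimension is bounded by $\dim R$, while $HH^{(i-1)}_{n-1}$ sits in a range where the fiber term is $HH_{n-1}^{(i-1)}(R)$ itself), giving $TK_n^{(i)}(R)\cong HH_{n-1}^{(i-1)}(R)$. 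In the range $i\ge n+2$, by contrast, the Hochschild term $HH^{(i-1)}_{n-1}(R)$ vanishes (the $i$-th Adams piece $HH^{(i)}$ is concentrated in homological degree $i$ Zariski-locally, and $n-1 < i-1$ forces it to be zero) so $F_{HH}^{(i-1)}(R)$ is a shift of $\bbH_\cdh(R,HH^{(i-1)})$, and $HH^{(i-1)}$ has $cdh$-sheafification $\Omega^{i-1}$ up to the usual Hodge-type reindexing; tracking the degrees yields $TK_n^{(i)}(R)\cong H_\cdh^{i-n-1}(R,\Omega^{i-1})$. Both identifications are exactly the content of the structural results on $F_{HH}$ in \cite[\S5]{chww-fibrant}, so the honest work has already been done there.

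The main obstacle, such as it is, is bookkeeping: keeping the cochain/chain conventions straight (the paper's convention $H^{-i}E(X)=\pi_iE(X)$ and the shift rule $\pi_nE[1]=\pi_{n-1}E$), making sure the Adams weight shift by one between $K$-theory and cyclic homology is consistently applied ($F_K^{(i)}\simeq F_{HC}^{(i-1)}[1]$ and hence $TK^{(i)}_n$ involves $F_{HH}^{(i-1)}$, not $F_{HH}^{(i)}$), and confirming that the range conditions ``$i<n$'' versus ``$i\ge n+2$'' in the statement correspond correctly to the vanishing ranges of, respectively, the $cdh$ term and the Hochschild term. Once the conventions are pinned down, each claim is a one-line consequence of a cited result, so I would write the proof as: (1) cite Proposition \ref{Kunneth} and take homotopy groups for the first two displayed isomorphisms, commuting $\oo_R L^{\oo j}$ past cohomology; (2) split into Adams eigenspaces using naturality; (3) quote \cite{chww-fibrant} for the two-case evaluation of $TK_n^{(i)}(R)$, with a sentence in each case explaining which term of the defining triangle for $F_{HH}^{(i-1)}$ vanishes.
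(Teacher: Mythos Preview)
Your proposal is correct and mirrors the paper's treatment: the corollary is stated there as an immediate rephrasing of Proposition~\ref{Kunneth} together with the computation of $TK_n^{(i)}(R)$ already carried out in \cite{chww-fibrant}. (Your parenthetical explanations are slightly off---for $i<n$ the relevant $cdh$ terms vanish because they sit in \emph{negative} cohomological degree, not because of a dimension bound, and for $i\ge n+2$ one uses that $HH_q^{(p)}=0$ whenever $q<p$ over any $\Q$-algebra, not a local concentration statement---but since you ultimately defer the argument to \cite{chww-fibrant} this does not affect the validity of the proof.)
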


(The formulas for $TK_n^{(n)}$ and $TK_n^{(n+1)}$ are more complicated;
see {\it loc.\,cit}.)
The following special case $n=0$ of \ref{TK(RL)}, which is
an analogue of \cite[(0.5)]{chww-fibrant},
shows that we cannot twist out the example in \cite[Theorem 0.2]{chw-v}

\begin{cor}
Let $L$ be a rank 1 projective $R$-module, where $R$ is a $d$-dimensional
commutative $\Q$-algebra, with seminormalization $R^+$,
and $R[L]$ the twisted polynomial ring.  Then
\[
K_0(R[L],R)\cong \biggl((R^+/R)\oplus
\bigoplus_{p=1}^{d-1}\bbH_\cdh^{p}(R,\Omega^{p}) \biggr)\oo_R LR[L].
\]
In particular, $K_n(R)=K_n(R[t])$ if and only if $K_n(R)=K_n(R[L])$.
\end{cor}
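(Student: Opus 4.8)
The plan is to deduce the final statement as a direct consequence of Corollary \ref{TK(RL)}, specifically its case $n=0$. First I would unwind the meaning of the claim: the formula in the displayed equation is precisely the $n=0$ instance, so the real content is the identification of $TK_0^{(i)}(R)$ for $0\le i\le d$, together with the fact that $TK_0(R)\oo_R LR[L]$ computes $K_0(R[L],R)$. Thus the first step is to invoke $K_0(R[L],R)\cong TK_0(R)\oo_R LR[L]$ from \ref{TK(RL)} and to recall the Adams decomposition $TK_0(R)=\bigoplus_i TK_0^{(i)}(R)$. Since $\dim(R)=d$, the relevant pieces are bounded: the weight-$i$ part of $HH_{-1}$ and of $\Omega^{i-1}$ vanishes outside a range controlled by $d$, so only finitely many $i$ contribute.

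Next I would evaluate each weight piece. For $i\ge 2$, the second case of \ref{TK(RL)} with $n=0$ gives $TK_0^{(i)}(R)\cong \bbH_\cdh^{i-1}(R,\Omega^{i-1})$; reindexing by $p=i-1$ this is $\bbH_\cdh^{p}(R,\Omega^{p})$ for $p\ge 1$, and it vanishes for $p\ge d$ (equivalently $i\ge d+1$) since $\bbH_\cdh^{p}(R,-)=0$ in degrees exceeding $\dim(R)$, so the sum runs $p=1,\dots,d-1$. The weight-$1$ term $TK_0^{(1)}(R)$ is the ``more complicated'' case flagged after \ref{TK(RL)}; here I would use the known identification (as in \cite[(0.5)]{chww-fibrant}) that this piece is the seminormalization defect $R^+/R$, which is the $F_{HH}^{(0)}$ contribution in degree $1$. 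Finally the weight-$0$ piece $TK_0^{(0)}(R)$ vanishes because $K_n^{(0)}(R[L],R)=0$. Assembling these gives $TK_0(R)\cong (R^+/R)\oplus\bigoplus_{p=1}^{d-1}\bbH_\cdh^{p}(R,\Omega^{p})$, and tensoring with $LR[L]$ yields the stated formula.

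For the ``in particular'' clause, I would argue that $K_n(R)=K_n(R[t])$ for all relevant $n$ holds iff all the typical pieces $TK_n(R)$ vanish, and likewise $K_n(R)=K_n(R[L])$ iff $TK_n(R)\oo_R LR[L]=0$; since $LR[L]=\bigoplus_{j\ge1}L^{\oo j}$ is a faithfully flat-in-the-relevant-sense module (each $L^{\oo j}$ is an invertible $R$-module, so $TK_n(R)\oo_R L^{\oo j}=0$ iff $TK_n(R)=0$ locally, hence globally), the two vanishing conditions are equivalent. Concretely, $M\oo_R L\cong 0$ for an invertible module $L$ forces $M=0$ because invertibility is Zariski-local and $M\oo_R L$ is Zariski-locally isomorphic to $M$; applying this to $M=TK_n(R)$ and to each summand gives the equivalence. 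One should be slightly careful about whether ``$K_n(R)=K_n(R[t])$'' is meant with a fixed $n$ or for all $n$; I would state it for each fixed $n$, using $NK_n(R)\cong TK_n(R)\oo_R tR[t]$ and the analogous twisted formula.

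The main obstacle, such as it is, is bookkeeping rather than mathematics: one must correctly handle the two exceptional weights $i=n$ and $i=n+1$ (here $i=0,1$) that \ref{TK(RL)} does not cover by its clean formula, and correctly truncate the cohomological range using $\dim(R)=d$ so that the upper limit of the sum is $d-1$ and not $d$ or $\infty$. The identification of $TK_0^{(1)}(R)$ with $R^+/R$ is the one genuinely external input, imported from \cite{chww-fibrant}; everything else is a reindexing of Corollary \ref{TK(RL)}. The faithful-flatness-style argument for the ``in particular'' is elementary once one remembers that an invertible module is locally free of rank one, so tensoring with it is a Zariski-local isomorphism and hence detects zero modules.
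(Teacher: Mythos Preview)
Your proposal is correct and follows essentially the same route as the paper: both obtain the formula as the $n=0$ case of Corollary~\ref{TK(RL)}, using the identification $\bbH_\cdh(X,HH^{(i)})\simeq Ra_*a^*\Omega^{i}[-i]$ (the paper cites \cite[2.2]{chw-v} for this, while you unpack the weight pieces $i=0,1,\ge2$ and the ``in particular'' clause explicitly). One small imprecision: your stated reason for truncating the sum at $p=d-1$ (``$\bbH_\cdh^{p}(R,-)=0$ in degrees exceeding $\dim(R)$'') only kills $p>d$, not $p=d$; for the boundary case you should appeal to \cite[(0.5)]{chww-fibrant} rather than cdh cohomological dimension alone.
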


\begin{proof}
This follows from the fact that
$\bbH_\cdh(X,HH^{(i)})\cong Ra_*a^*\Omega^i[-i]$, so that
when $i>1$ we have
$K_0^{(i)}(R[L],R)\cong\bbH_\cdh^{i-1}(R,\Omega^{i-1})\oo_R LR[L]$;
see \cite[2.2]{chw-v}.
\end{proof}

\begin{rem}\label{S-module}
Corollary \ref{TK(RL)} shows that $K_*(R[L],R)$ is a graded
$R[L]$-module. As in \cite{chww-fibrant},
this reflects the fact that locally $R[L]$ is a polynomial
ring, and $K_*(R[t],R)$ has a continuous module structure over
the ring of big Witt vectors $W(R)$, compatible with the operations
$V_n$ and $F_n$; when $\Q\subset R$,
such modules are graded $R[t]$-modules.
Since $H^0(\Spec R,\widetilde W)=W(R)$, patching
the structures via Zariski descent proves that $K_*(R[L],R)$
is a graded $R[L]$-module.

When $X$ is no longer affine, this Zariski descent argument
shows that
\[
K_n(\bbL,X) = \oplus H^{1-n}(X,F_{HH}\oozar \calL^{\oo i})
\]
is a graded module over $S=\oplus H^0(X,\calL^{\oo i})$.
This is clear from Proposition \ref{Kunneth}.
Previously, using \cite{W-mod}, it was only known that the
$K_n(\bbL,X)$ are continuous modules over
$H^0(X,\widetilde{W})=W(k)=\prod_1^\infty k$.
\end{rem}

\medskip
\section{Proof of Theorem \ref{main}}

In order to use Proposition \ref{Kunneth},
we need to analyze
$\bbH_\zar^n\left(X,F_{HH/\Q}\oo\calL^{j}\right)$.
For this, we use the hypercohomology spectral sequence.
(See \cite[5.7.10]{WH}.)
\begin{equation}\label{eq:SS}
E_2^{p,q} = H^p_\zar(X,H^qE) \Rightarrow \bbH_\zar^{p+q}(X,E).
\end{equation}
Here $E$ is a cochain complex which need not be bounded below and
(by abuse of notation) the $E_2^{}$ term denotes cohomology with
coefficients in the Zariski sheaf associated to $H^qE$;
the spectral sequence converges if $X$ is noetherian and finite dimensional.
When $E=F_{HH}\oo\calL^j$, we have $H^qE=H^q(F_{HH})\oo\calL^j$,
because $\calL^j$ is flat.

\begin{lem}\label{l:hyper}
If $X$ is noetherian and finite dimensional, and
$E$ is a complex of Zariski sheaves
such that
$H_\zar^p(X,H^qE)=0$ for $1\le p\le\dim(X)$ and $p+q=s,s+1$
then $\bbH_\zar^s(X,E)\cong H_\zar^0(X,H^sE)$.
\end{lem}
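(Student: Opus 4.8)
The plan is to read the result straight off the hypercohomology spectral sequence \eqref{eq:SS}, whose $E_2$-page is $E_2^{p,q}=H^p_\zar(X,H^qE)$ and which, since $X$ is noetherian and finite dimensional, converges to $\bbH_\zar^{p+q}(X,E)$. First I would record the vanishing already present on the $E_2$-page coming from the Zariski cohomological dimension of $X$: we have $E_2^{p,q}=0$ whenever $p<0$ or $p>\dim(X)$. Combined with the hypothesis, this says that on the total-degree line $p+q=s$ the only term of $E_2$ that can be nonzero is the corner term $E_2^{0,s}=H^0_\zar(X,H^sE)$, and that on the line $p+q=s+1$ every term with $p\ge 1$ vanishes.

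Next I would check that the corner term $E_r^{0,s}$ supports no nonzero differential on any page $r\ge 2$. The incoming differential $d_r\colon E_r^{-r,\,s+r-1}\to E_r^{0,s}$ has source in negative horizontal degree, hence vanishes. The outgoing differential $d_r\colon E_r^{0,s}\to E_r^{r,\,s-r+1}$ has target a subquotient of $E_2^{r,\,s-r+1}$, which sits on the line $p+q=s+1$ with $p=r\ge 1$; this is zero, either by the hypothesis when $r\le\dim(X)$, or by finite cohomological dimension when $r>\dim(X)$. Therefore $E_\infty^{0,s}=E_2^{0,s}=H^0_\zar(X,H^sE)$.

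Finally I would assemble the abutment. The filtration on $\bbH_\zar^s(X,E)$ has associated graded $\bigoplus_p E_\infty^{p,\,s-p}$; by the first step every summand with $p\ne 0$ already vanishes at $E_2$, hence at $E_\infty$. Since $E_2^{p,q}$ is concentrated in the band $0\le p\le\dim(X)$, the filtration is finite in each total degree, so there is no convergence issue, and the edge map gives $\bbH_\zar^s(X,E)\cong E_\infty^{0,s}\cong H^0_\zar(X,H^sE)$, as claimed.

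I do not expect a serious obstacle here; the one point that genuinely uses the hypothesis in degree $s+1$ rather than merely in degree $s$ is the vanishing of the outgoing differentials $d_r\colon E_r^{0,s}\to E_r^{r,\,s+1-r}$, so in writing it up I would be careful to invoke the assumption explicitly for both $p+q=s$ and $p+q=s+1$, and to note where finite Zariski cohomological dimension is used to handle the tail $r>\dim(X)$.
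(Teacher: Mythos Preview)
Your proposal is correct and follows exactly the approach of the paper: the paper's proof is the single sentence ``This is immediate from the hypercohomology spectral sequence \eqref{eq:SS},'' and you have simply unpacked that sentence, correctly identifying where the hypothesis on the lines $p+q=s$ and $p+q=s+1$ and the finite Zariski cohomological dimension are used.
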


\begin{proof}
This is immediate from the hypercohomology spectral
sequence \eqref{eq:SS}.
\end{proof}

In the remainder of this section,  we will write $H^p(X,-)$ for $H^p_\zar(X,-)$.
By a ``quasi-coherent'' (or ``coherent'') sheaf on $\Sch/k$ we mean a
Zariski sheaf whose restriction to every small Zariski site is
quasi-coherent (or coherent).
When discussing Hochschild homology (or
cyclic homology, or differentials, etc.\,) relative to $\Q$, we will
suppress the base from the notation. For example, if $X$ is a $k$-scheme
then $HH_n(X)$ and $\Omega^n_X$ will mean $HH_n(X/\Q)$ and $\Omega^n_{X/\Q}$.

Recall that when $\Q\subseteq k$, the Hochschild homology complex
relative to $k$ decomposes into a direct sum of weight pieces
$HH^{(j)}(-/k)$; this induces decompositions on $\bbH_\cdh (-,
HH(/k))$, the fiber $F_{HH(/k)}$, and on their cohomology sheaves and
hypercohomology groups as well. As in \cite{chw-v}, we use versions of
a spectral sequence introduced by Kassel and Sletsj\oe\ in \cite{kasle}
to obtain information about $F_{HH(/k)}$ from information about
$F_{HH}$.

\goodbreak
\begin{lem}\label{lem:KS} (Kassel-Sletsj\oe)
Let $\Q\subseteq k$ and $p\ge1$ be fixed, and $X$ a scheme over $k$.
Then there are bounded cohomological spectral sequences
of quasi-coherent sheaves on $\Sch/k$ ($p>s\ge0$):
\[
E_1^{s,t} = \Omega^s_k \oo_k H^{2s+t-p}HH^{(p-s)}(-/k)
\Rightarrow H^{s+t-p}HH^{(p)}(-/\Q)
\]
(for $s+t\le0$) and
\[
E_1^{s,t} = \Omega^s_k \oo_k H^{s+t}(Ra_*\Omega_{(-/k),\cdh}^{(p-s)})
\Rightarrow H^{s+t}(Ra_*\Omega_\cdh^p)
\]
and a morphism of spectral sequences between them. If $k$ has finite
transcendence degree, then both spectral sequences are spectral
sequences of coherent sheaves.
\end{lem}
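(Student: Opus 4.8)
The plan is to derive both spectral sequences, together with the comparison map between them, from a single source: the Kassel--Sletsj\oe{} transitivity spectral sequence \cite{kasle} for the tower $\Q\subseteq k\subseteq\cO$, applied on affine pieces and then sheafified. Since $k$ has characteristic $0$, the extension $\Q\subseteq k$ is formally smooth, so that (HKR) $HH^{(s)}_n(k/\Q)=\wedge^s_k\Omega^1_{k/\Q}$ for $n=s$ and vanishes otherwise, and each $\wedge^s_k\Omega^1_{k/\Q}=\Omega^s_{k/\Q}$ is a flat (free) $k$-module, finite dimensional exactly when $k$ has finite transcendence degree over $\Q$. For a finite type $k$-algebra $A$ all flatness hypotheses in \cite{kasle} are automatic ($A$ is flat over the field $k$, and $k$ is flat over $\Q$), so the Kassel--Sletsj\oe{} construction supplies a natural finite filtration on the complex $HH^{(p)}(A/\Q)$ whose $s$-th associated graded piece is a shift of $\Omega^s_k\otimes_k HH^{(p-s)}(A/k)$, for $0\le s<p$. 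Rewriting this homological datum in the cohomological, shifted conventions of \S1, and passing to the associated Zariski sheaves --- legitimate since the Kassel--Sletsj\oe{} construction is natural in $A$ --- gives the first spectral sequence. Its $E_1$-terms are quasi-coherent because each $H^qHH^{(j)}(-/k)$ is quasi-coherent (Example~\ref{HH descent}) and $\Omega^s_k$ is $k$-flat, and they are coherent once $\Omega^1_{k/\Q}$ is finite dimensional, since tensoring a coherent sheaf with a finite dimensional $k$-vector space preserves coherence. The spectral sequence is bounded and convergent because $s$ ranges over the finite set $0\le s<p$, and it is concentrated in the range $s+t\le 0$ because $HH^{(p)}$ lives in cohomological degrees $\le -p$, which also pins down the internal degree $2s+t-p$ of the $HH(-/k)$-factor.

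For the second spectral sequence I would run the identical construction after passing to $cdh$-fibrant replacements, i.e.\ apply $Ra_*a^*$ to the filtered complex above. This is harmless: $\otimes_k\Omega^s_k$ is exact --- it is simply a direct sum of copies of the identity --- hence commutes both with the monoidal functor $a^*$ and, the relevant sites being quasi-compact, with $Ra_*$ (cf.\ Lemma~\ref{lem:directsumreplace}); and the identification $\bbH_\cdh(-,HH^{(j)}(/k))\simeq Ra_*a^*\Omega^j_{(-/k)}[-j]$ (see \cite[2.2]{chw-v} and \cite{chsw}, and its analogue relative to $\Q$ for the abutment) replaces each graded piece $\Omega^s_k\otimes_k HH^{(p-s)}(-/k)$ by $\Omega^s_k\otimes_k Ra_*\Omega^{(p-s)}_{(-/k),\cdh}$ and the target $HH^{(p)}(-/\Q)$ by $Ra_*\Omega^p_\cdh$. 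Taking cohomology sheaves of this $cdh$-local filtered complex yields the second spectral sequence with $E_1^{s,t}=\Omega^s_k\otimes_k H^{s+t}(Ra_*\Omega^{(p-s)}_{(-/k),\cdh})$ converging to $H^{s+t}(Ra_*\Omega^p_\cdh)$; coherence when $k$ has finite transcendence degree follows as before, using that the cohomology sheaves of $Ra_*a^*\Omega^j$ are coherent on finite type $k$-schemes \cite{chsw}. The morphism of spectral sequences is induced by the canonical map of filtered complexes from the first one to its $cdh$-fibrant replacement --- the natural, hence filtration-preserving, maps $HH\to\bbH_\cdh(-,HH)$ --- up to the evident reindexing forced by the shift $[-j]$ in the identification above.

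I expect the only genuine obstacle to be the bookkeeping rather than any single step. One must track the Quillen weights correctly ($\Omega^s_k$ carries weight $s$, the $HH(-/k)$-factor weight $p-s$, summing to $p$) and translate the homological Kassel--Sletsj\oe{} indexing into the cohomological shift conventions of \S1 so that the graded pieces land precisely in the stated degrees $2s+t-p$ and $s+t$; and one must check that $cdh$-sheafification really carries the first filtered complex to the second compatibly with the comparison map. By contrast the hypotheses of \cite{kasle} are automatic over a field, exactness of $\otimes_k\Omega^s_k$ makes the $cdh$-passage transparent, and (quasi-)coherence and boundedness are immediate; so the index matching is the one place where an error could enter, and that is where I would concentrate the effort.
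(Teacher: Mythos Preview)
Your proposal is correct and follows essentially the same route as the paper: start from the Kassel--Sletsj\oe{} transitivity spectral sequence for $\Q\subseteq k\subseteq A$, sheafify for the Zariski topology, then pass to the $cdh$ topology using $HH^{(j)}_\cdh\simeq\Omega^j_\cdh[j]$, with the comparison map being the change-of-topology morphism. The one point the paper treats more carefully is the justification for sheafifying: it verifies explicitly (by inspecting the filtration of $\Omega^p_{Q_\bullet/\Q}$ in \cite[\S3]{kasle}) that the exact couple is one of $R$-modules compatible with localization, whereas you invoke only naturality in $A$; this extra check is what guarantees that the resulting sheaves of $E_1$-terms and abutments are genuinely quasi-coherent rather than merely Zariski sheaves, and the paper also cites \cite[2.8]{chw-v} for quasi-coherence on the $cdh$ side.
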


We remark that the second spectral sequence is just the sheafification
of the spectral sequence in \cite[4.2]{chw-v}.

\begin{proof}
If $X=\Spec(R)$, the homological spectral sequence in \cite[4.3a]{kasle} is
\[
{}_p E^1_{-i,i+j}=\Omega^{i}_{k}\oo_k HH^{(p-i)}_{p-i+j}(R/k)
\Rightarrow HH_{p+j}^{(p)}(R)
\]
($0\le i<p$, $j\ge0$); see \cite[4.1]{chw-v}.

We claim that this is a spectral sequence of $R$-modules, compatible
with localization of $R$. Indeed, following the construction in
\cite[Theorem 3.2]{kasle}, the exact couple underlying the spectral
sequence is constructed by choosing $\Q$-cofibrant simplicial
resolutions $P_\mathdot \to k$ and $Q_\mathdot \to R$ and then filtering
the differential modules $\Omega^p_{Q_\mathdot/\Q}$ by certain
$Q_\mathdot$-submodules, leading to a filtration of
$\Omega^p_{Q_\mathdot/\Q}\otimes_{Q_\mathdot} B$ by $B$-modules.
(Although the filtration steps are defined as certain
$P_\mathdot$-submodules in \cite[Section 3]{kasle}, they are in
fact $Q_\mathdot$-submodules.)
The identification of the associated graded via
\cite[Lemma 3.1]{kasle} is easily checked to be a $B$-module
isomorphism. The whole construction commutes with localization because
forming differential modules does.

Setting  $\ell=i+j$, the spectral sequence is
\[
{}_p E^1_{-i,\ell}=\Omega^{i}_{k}\oo_k HH^{(p-i)}_{p+\ell-2i}(R/k)
\Rightarrow HH_{p+\ell-i}^{(p)}(R),\quad \ell\le i.
\]
As this spectral sequence is a spectral sequence of $R$-modules,
compatible with localization and  natural in $R$, we may sheafify it for
the Zariski topology to obtain a spectral sequence of quasi-coherent sheaves.
Reindexing cohomologically, with $s=i$ and $t=-\ell$, we have
\[
{}^p E_1^{s,t}=\Omega^{s}_{k}\otimes_k H^{2s+t-p}(HH^{(p-s)})(-/k)
\Rightarrow H^{s+t-p}(HH^{(p)}).
\]
This yields the first spectral sequence.
If we sheafify it for the $cdh$ topology, and use the isomorphism
$HH^{(p)}_\cdh \cong \Omega^p_\cdh[p]$, we get the second spectral sequence.
That it is still a spectral sequence of quasi-coherent sheaves follows
from \cite[lemma 2.8]{chw-v}.
The morphism between the spectral sequences is just the
change-of-topology map.

Finally, if $k$ has finite transcendence degree, then the $E_1$-terms
of both spectral sequences are coherent (apply \cite[lemma 2.8]{chw-v}
again for the second one) and hence so are the abutments.
\end{proof}

\begin{cor}\label{cor:KS}
There is a bounded spectral sequence of quasi-coherent sheaves
\[
E_1^{s,t} = \Omega^s_k \oo_k H^{2s+t-p}(F_{HH/k}^{(p-s)})
\Rightarrow H^{s+t-p}(F_{HH}^{(p)}).
\]
If $k$ has finite transcendence degree, this is a spectral
sequence of coherent sheaves.
\end{cor}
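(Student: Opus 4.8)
The plan is to realize both spectral sequences of Lemma~\ref{lem:KS} as arising from a single morphism of finitely filtered complexes of sheaves, pass to its mapping fiber, and read off the resulting spectral sequence.

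Recall from the proof of Lemma~\ref{lem:KS} that the first Kassel-Sletsj\oe\ spectral sequence is attached to a finitely filtered complex $\mathcal C$ of quasi-coherent Zariski sheaves---constructed functorially from simplicial resolutions, by filtering $\Omega^p_{Q_\mathdot/\Q}$ as in that proof---which is quasi-isomorphic to $HH^{(p)}(-/\Q)$ up to a fixed shift and whose $s$-th associated graded piece is $\Omega^s_k\oo_k HH^{(p-s)}(-/k)$ up to that shift. Applying the exact functor $\bbH_\cdh(-,-)$ termwise to $\mathcal C$ preserves the filtration, commutes with the direct sums $\Omega^s_k\oo_k(-)$ by Lemma~\ref{lem:directsumreplace}, and---via $HH^{(q)}_\cdh\cong\Omega^q_\cdh[q]$---yields the second spectral sequence of Lemma~\ref{lem:KS}; the canonical map $\mathcal C\to\bbH_\cdh(-,\mathcal C)$ is thus a morphism of filtered complexes realizing the change-of-topology morphism of spectral sequences. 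After these identifications the two spectral sequences have aligned indices: each $E_1^{s,t}$ is built from cohomology in degree $2s+t-p$ and abuts in degree $s+t-p$.

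Next I would take the mapping fiber $\mathcal F$ of $\mathcal C\to\bbH_\cdh(-,\mathcal C)$, with the induced (still finite) filtration. Because homotopy fibers are exact and commute with the flat functors $\Omega^s_k\oo_k(-)$, the underlying complex of $\mathcal F$ is $F_{HH}^{(p)}$ up to the fixed shift---this being the definition of $F_{HH}$ on the weight-$p$ summand---and the $s$-th associated graded piece of $\mathcal F$ is $\Omega^s_k\oo_k F_{HH/k}^{(p-s)}$ up to that shift. Hence the spectral sequence of $\mathcal F$ has $E_1^{s,t}=\Omega^s_k\oo_k H^{2s+t-p}(F_{HH/k}^{(p-s)})$, converges to $H^{s+t-p}(F_{HH}^{(p)})$, and is bounded since only the filtration steps $0\le s<p$ contribute. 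Quasi-coherence of the $E_1$-terms is inherited from Lemma~\ref{lem:KS}: the cohomology sheaves of $HH^{(p-s)}(-/k)$ and of $\bbH_\cdh(-,HH^{(p-s)}(/k))$ are quasi-coherent (the latter by \cite[lemma 2.8]{chw-v}), hence so are those of $F_{HH/k}^{(p-s)}$, and $\Omega^s_k\oo_k(-)$---a direct sum of copies---preserves quasi-coherence; when $k$ has finite transcendence degree these sheaves are coherent and $\dim_k\Omega^s_k$ is finite, so the $E_1$-terms are coherent.

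The step requiring care is the first: one must verify that the morphism of spectral sequences in Lemma~\ref{lem:KS} genuinely underlies a morphism of \emph{filtered} complexes---an abstract morphism of spectral sequences has no mapping fiber---and that the fiber complex is honestly quasi-isomorphic to $F_{HH}^{(p)}$, not merely cohomologically so. Both points are built into the construction used to prove Lemma~\ref{lem:KS}, where the filtration of $\Omega^p_{Q_\mathdot/\Q}$ is produced functorially and the change-of-topology map manifestly respects it; the remainder is bookkeeping of the shift and repeated use of flatness of $\Omega^s_k$ over $k$.
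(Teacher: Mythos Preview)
Your proposal is correct and follows essentially the same route as the paper: realize the change-of-topology map as a morphism of filtered complexes, take the (shifted) mapping cone/fiber with its induced filtration, and read off the resulting spectral sequence. The paper compresses this into a single sentence invoking the Eilenberg--Moore filtration on a mapping cone \cite[Ex.\,5.4.4]{WH}, whereas you spell out the identification of the associated gradeds and the quasi-coherence/coherence check explicitly; the underlying argument is the same.
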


\begin{proof}
The morphism of spectral sequences in Lemma \ref{lem:KS}
comes from a morphism $HH^{(p)}\to HH^{(p)}_\cdh$
of filtered complexes of quasi-coherent sheaves on $\Sch/k$.
By a lemma of Eilenberg--Moore \cite[Ex.\,5.4.4]{WH}, there is a
filtration on the [shifted] mapping cone $F_{HH}^{(p)}$
of $HH^{(p)}\to HH_\cdh^{(p)}$, yielding a
spectral sequence converging to $H^*(F_{HH})$.
This is the displayed spectral sequence.
\end{proof}

\begin{prop}\label{twist}
Assume that $k$ has finite transcendence degree. If $\calL$ is
an ample line bundle on $X$, then for every
$n$ and $p\geq 0$ there is an $N_0 = N_0(n,p)$ such that for all $N>N_0$
the Zariski sheaf $H^nF_{HH}^{(p)}\oo\calL^{\oo N}$
is generated by its global sections,
and $H^q(X,H^nF_{HH}^{(p)}\oo\calL^{\oo N})=0$ for all $q>0$.
\end{prop}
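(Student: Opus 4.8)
The plan is to reduce the proposition to Serre's classical theorems on ample invertible sheaves; the only substantive point is that the Zariski sheaves $H^nF_{HH}^{(p)}$ are \emph{coherent}, and that is the first thing I would establish. Relative to $k$ this is routine: the cohomology sheaves of $HH^{(p)}(-/k)$ are coherent because $X$ is of finite type over $k$, and those of $HH^{(p)}_\cdh\simeq\Omega^p_\cdh[p]$, i.e.\ of $Ra_*\Omega^p_\cdh[p]$, are coherent by \cite[2.8]{chw-v}; hence the [shifted] mapping cone $F_{HH/k}^{(p)}$ has coherent cohomology sheaves. Now the hypothesis that $k$ has finite transcendence degree enters: then every $\Omega^s_k$ is a finite-dimensional $k$-vector space, so in the bounded Kassel--Sletsj\oe\ spectral sequence of Corollary~\ref{cor:KS} the terms $E_1^{s,t}=\Omega^s_k\oo_k H^{2s+t-p}(F_{HH/k}^{(p-s)})$ are coherent, and therefore so is the abutment $H^*(F_{HH}^{(p)})$ --- which is precisely the coherence already recorded in Corollary~\ref{cor:KS}.

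With coherence in hand, fix $n$ and $p$ and put $\cG=H^nF_{HH}^{(p)}$, a coherent sheaf on the noetherian scheme $X$ of dimension $d$. Since $\calL$ is ample, Serre's theorem on ample sheaves provides an integer $N_1$ such that $\cG\oo\calL^{\oo N}$ is generated by its global sections for all $N>N_1$. Since the cohomology of a coherent sheaf on $X$ can be nonzero only in the finitely many degrees $1\le q\le d$, Serre's vanishing theorem provides an integer $N_2$ such that $H^q(X,\cG\oo\calL^{\oo N})=0$ for all $q>0$ and all $N>N_2$ (see \cite{EGA}). Taking $N_0(n,p)=\max(N_1,N_2)$ finishes the proof.

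I expect the coherence step to be the main obstacle --- concretely, the facts that $Ra_*\Omega^p_\cdh$ has coherent cohomology and that coherence is preserved under the passage from $HH(-/k)$ to $HH(-/\Q)$. These are supplied by \cite[2.8]{chw-v} and Corollary~\ref{cor:KS}, and are the sole place where the finite-transcendence-degree hypothesis is used; granting them, the proposition is a formal consequence of the classical theory of ample invertible sheaves.
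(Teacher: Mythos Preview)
Your proof is correct and follows essentially the same approach as the paper: establish that the cohomology sheaves $H^nF_{HH}^{(p)}$ are coherent (via the Kassel--Sletsj\oe\ spectral sequence of Corollary~\ref{cor:KS}, using finite transcendence degree of $k$), then apply Serre's theorems on ample invertible sheaves. The only cosmetic difference is that the paper singles out the case $p=0$ --- where the spectral sequence of Lemma~\ref{lem:KS} does not apply --- and cites \cite[6.5]{chsw} for the coherence of the cone of $\cO\to Ra_*a^*\cO$; your direct argument for $F_{HH/k}^{(p)}$ already covers this since $F_{HH}^{(0)}=F_{HH/k}^{(0)}$.
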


\begin{proof}
  The complex $F_{HH}^{(0)}$ is quasi-isomorphic to the cone of the
  map from the structure sheaf $\cO$ to $Ra_* a^* \cO$ and thus has
  coherent cohomology by \cite[Lemma 6.5]{chsw}. If $p > 0$, then by
  Corollary \ref{cor:KS} the cohomology sheaves in question are
  coherent as well. Now apply Serre's Theorem B.
\end{proof}

Let $\calL$ be an ample sheaf on $X$ and $\bbL$ the line bundle
$\Spec(\Sym\,\calL)$.  Recall that for any $Y$, $F_{HC}(Y)$ is
$n$-connected if and only if $F_{HH}(Y)$ is $n$-connected; see
\cite[1.7]{chw-v}. If $\bbL$ is a line bundle over $X$, we define
$\F_{HH/k}(\bbL,X)$ to be the
cokernel of the canonical split injection
$\F_{HH/k}(X)\to\F_{HH/k}(\bbL)$, and similarly for cyclic homology.

\begin{thm}\label{thm:HH}
If $F_{HC}(\bbL,X)$ is $n$-connected for some ample line bundle
$\calL$ on $X$, then $F_{HH}(\bbL,X)$ is $n$-connected and:
\begin{enumerate}
\item  The Zariski sheaf $F_{HH}$ is $n$-connected.
\item  $X$ is regular in codimension $\le n$.
\item If $F_{HC}(\bbL,X)$ is $d$-connected for $d=\dim(X)$,
then $X$ is regular.
\end{enumerate}
\end{thm}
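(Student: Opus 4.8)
The heart of the matter is~(1): once the cohomology sheaves $H^iF_{HH}$ on $X$ are known to vanish for $i\ge -n$, assertion~(2) follows by localising at points of codimension $\le n$, (3) is the case $n=\dim X$, and the $n$-connectivity of $F_{HH}(\bbL,X)$ is a separate, easy consequence of the K\"unneth formula. As usual we may first reduce to the case where $k$ has finite transcendence degree over $\Q$, so that Proposition~\ref{twist} is available. Next I would translate the hypothesis. Since the zero section splits $p\colon\bbL\to X$, the map $F_{HC}(X)\to F_{HC}(\bbL)$ is split injective, and by Proposition~\ref{Kunneth} its cokernel $F_{HC}(\bbL,X)$ is quasi-isomorphic to $\bigoplus_{j\ge1}(F_{HH}\oozar\calL^{\oo j})(X)$. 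Each $F_{HH}\oozar\calL^{\oo j}$ again satisfies Zariski descent (Corollary~\ref{Hzar-L}), so the hypothesis that $F_{HC}(\bbL,X)$ be $n$-connected is equivalent to the vanishing of $\bbH_\zar^i(X,F_{HH}\oozar\calL^{\oo j})$ for all $i\ge -n$ and all $j\ge1$. Feeding the $HH$-part of Lemma~\ref{aff:Kunneth} into the argument of Proposition~\ref{Kunneth} in place of the $HC$-part gives
\[
F_{HH}(\bbL,X)\simeq\bigoplus_{j\ge1}(F_{HH}\oozar\calL^{\oo j})(X)\ \oplus\ \Bigl(\bigoplus_{j\ge1}(F_{HH}\oozar\calL^{\oo j})(X)\Bigr)[1],
\]
and both summands are $n$-connected by the vanishing just noted, so $F_{HH}(\bbL,X)$ is $n$-connected.

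For~(1) I argue by contradiction, using the weight decomposition $F_{HH}=\bigoplus_pF_{HH}^{(p)}$. The sheaves $H^iF_{HH}$ on $X$ vanish for $i\gg0$ (immediate from the bound on $\bbH_\zar(-,HH)$ in Example~\ref{HH descent} and the analogous $cdh$-bound), so if $H^iF_{HH}\neq0$ for some $i\ge -n$ there is a largest such, say $s$, and we fix a weight $p$ with $H^sF_{HH}^{(p)}\neq0$. By Proposition~\ref{twist} the sheaves $H^rF_{HH}^{(p)}$ are coherent; since $H^q_\zar(X,-)=0$ for $q>\dim X$, only the finitely many cohomology sheaves $H^rF_{HH}^{(p)}$ with $r\in[s-\dim X,\,s]$ contribute to the abutment in total degree $s$ of the hypercohomology spectral sequence~\eqref{eq:SS} for $E=F_{HH}^{(p)}\oozar\calL^{\oo N}$. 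Applying Proposition~\ref{twist} once more, I choose $N$ so large that for each such $r$ the coherent sheaf $H^rF_{HH}^{(p)}\oozar\calL^{\oo N}$ is generated by global sections and has vanishing higher cohomology. Then in total degree $s$ the only surviving $E_2$-term is $E_2^{0,s}=H^0(X,H^sF_{HH}^{(p)}\oozar\calL^{\oo N})$, which is nonzero because a nonzero globally generated coherent sheaf has a nonzero global section, while every differential into or out of this corner has a zero source or target; hence $\bbH_\zar^s(X,F_{HH}^{(p)}\oozar\calL^{\oo N})\neq0$. By the weight decomposition and Lemma~\ref{lem:directsumreplace} this is a direct summand of $\bbH_\zar^s(X,F_{HH}\oozar\calL^{\oo N})$, which vanishes by the translated hypothesis since $s\ge -n$ and $N\ge1$; this contradiction proves $H^iF_{HH}=0$ for all $i\ge -n$.

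For~(2), let $x\in X$ have codimension $c=\dim\cO_{X,x}\le n$. Hochschild homology and $cdh$-cohomology commute with the filtered colimit computing $\cO_{X,x}$, so $H^iF_{HH}(\cO_{X,x})=(H^iF_{HH})_x=0$ for $i\ge -n\ge -c$; that is, $F_{HH}(\cO_{X,x})$ is $c$-connected. By the affine K\"unneth formula (Corollary~\ref{cor:Kunneth}) together with \cite[1.6]{chw-v}, this is equivalent to $K_i(\cO_{X,x})\cong K_i(\cO_{X,x}[t])$ for all $i\le c+1$; hence, by the affine case of the Vorst conjecture \cite[0.1]{chw-v} --- valid for rings essentially of finite type --- the local ring $\cO_{X,x}$ is regular in codimension $<c+1$, and, having dimension $c$, it is therefore regular. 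Thus $X$ is regular in codimension $\le n$, which is~(2); and (3) is the case $n=\dim X$, since then every point of $X$ has codimension $\le\dim X$.

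The step I expect to be the real obstacle is~(1): passing from the vanishing of the twisted hypercohomology groups $\bbH_\zar^*(X,F_{HH}\oozar\calL^{\oo j})$ back to the vanishing of the untwisted cohomology sheaves $H^*F_{HH}$. This is exactly where ampleness of $\calL$ is indispensable, and it hinges on the coherence of the cohomology sheaves of each weight piece $F_{HH}^{(p)}$ --- the reason for both the Kassel--Sletsj\oe\ spectral sequence (Corollary~\ref{cor:KS}) and the reduction to finite transcendence degree --- which is what allows one to play Serre's vanishing theorem off against the hypercohomology spectral sequence. Once (1) is in hand, (2) and (3) are merely the known affine statement applied stalkwise.
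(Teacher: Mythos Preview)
Your proposal is correct and follows essentially the same strategy as the paper: reduce to finite transcendence degree, use the weight decomposition and coherence of the $H^qF_{HH}^{(p)}$ (via Proposition~\ref{twist}) together with ampleness to collapse the hypercohomology spectral sequence, deduce vanishing of the cohomology sheaves, and then localise to invoke the affine result. The only cosmetic differences are that you frame part~(1) as a contradiction via a ``largest bad $s$'' whereas the paper applies Lemma~\ref{l:hyper} directly for all $s\ge -n$ at once, and for part~(2) you route through $K$-theory and Corollary~\ref{Vorst} while the paper cites \cite[4.8]{chw-v} (to pass from $F_{HH/\Q}$ to $F_{HH/k}$) and \cite[3.1]{chw-v} directly; both routes are valid and amount to the same thing.
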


\begin{proof}
There is a finitely generated subfield $k_0$ of $k$,
a $k_0$-scheme $X_0$ and an ample line bundle $\calL_0$ such that $X =
X_0\otimes_{k_0} k$ and $\calL = \calL_0\otimes_{k_0} k$. The
K\"unneth formula for Hochschild homology implies that $F_{HH}(\bbL,X)
= F_{HH}(\bbL_0,X_0)\otimes\Omega^\ast_{k/k_0}$, whence
$F_{HH}(\bbL,X)$ is $n$-connected if and only if $F_{HH}(\bbL_0,X_0)$
is. Thus we may assume that $k$ has finite transcendence degree.

(1)
Recall \cite[2.1]{chw-v} that $F_{HH}(\bbL,X)=\prod F_{HH}^{(p)}(\bbL,X)$.
Thus it suffices to fix $p$ and show that $F_{HH}^{(p)}$ is $n$-connected.
Set $\cG_N=\calL^N\oo F^{(p)}_{HH}$,
and note that $H^q\cG_N=\calL^N\oo H^qF^{(p)}_{HH}$.
By Proposition \ref{twist} and
Lemma \ref{l:hyper}, $H^s(X,\cG_N)\cong H^0(X,H^s\cG_N)$ for large $N$
and all $s\ge-n$.

By assumption and Lemma \ref{descent}, the groups
\[
\pi_sF_{HH}^{(p)}(\bbL,X)= \bbH_\zar^{-s}(X,F^{(p)}_{HH}(\bbL,X))
=\bbH_\zar^{-s}(X,F^{(p)}_{HH}(\bbL)/F^{(p)}_{HH})
\]
vanish for $s\le n$.
By Lemma \ref{Kunneth}, this implies that for all $N>0$:
\[
H^0(X,H^{-s}\cG_N) \cong H^{-s}(X,\cG_N) =
H^{-s}(X,\calL^N\oo F^{(p)}_{HH})=0,
s\le d.
\]
Since $\calL$ is ample, the sheaves $H^s\cG_N=\calL^N\oo H^sF^{(p)}_{HH}$
are generated by their global sections $H^0(X,H^s\cG_N)$ for large $N$
and $s\ge-n$. This implies that the sheaves $\calL^N\oo H^sF^{(p)}_{HH}$
vanish, and hence that the sheaves $H^sF^{(p)}_{HH}$ vanish for $s\ge-n$.
This proves (1).

Given (1), the stalks $F_{HH}(\cO_{X,x})$ are $n$-connected.
We proved in \cite[4.8]{chw-v} that this implies that each
$F_{HH/k}(\cO_{X,x})$ is $n$-connected. If $\dim(\cO_{X,x})\ge n$, we proved in
\cite[3.1]{chw-v} that $\cO_{X,x}$ is smooth over $k$, and hence regular.
\end{proof}

\begin{variant}\label{thm:May16}
Let $X$, $\calL$ and $\bbL$ be as in Proposition \ref{thm:HH}.
Suppose that $F_{HC/k}(\bbL,X)$ is $n$-connected.
Then the proof of Theorem \ref{thm:HH} goes through to show that:
\begin{enumerate}
\item The sheaf $\cF_{HH/k}$ is $n$-connected.
\item  $X$ is regular in codimension $\le n$.
\item If $F_{HH/k}(\bbL,X)$ is $d$-connected for $d=\dim(X)$,
then $X$ is regular.
\end{enumerate}
\end{variant}

\begin{proof}[Proof of Theorem \ref{main}]
Suppose that $K_i(\bbL)\cong K_i(X)$ for all $i\le n$.
By Proposition \ref{Kunneth}, $F_{HC/\Q}(\bbL,X)$
is $(n-1)$-connected.
By Theorem \ref{thm:HH}, $F_{HH/\Q}(\bbL,X)$ is $(n-1)$-connected and
$X$ is regular in codimension $<n$
\end{proof}

\section{Two examples}\label{sec:4}

We conclude with two quick examples.
Let $E$ be an elliptic curve over $\Q$ with basepoint $Q$, and
$P$ a point such that $P-Q$ does not have finite order in $\Pic(E)$.

\begin{ex}\label{ex:DD}
Consider the non-reduced scheme $Y=\Spec(\cO_E\oplus J)$, where
$J$ is the invertible sheaf $\cO(P-Q)$.  We showed in
\cite[0.2]{chw-v} that $Y$ is $K_n$-regular for all $n$, because
$K_n(Y\times\A^1)\cong K_n(Y)\cong K_n(E)$ for all $n$.

Let $\calL$ be the sheaf $\cO(Q)$ and set $\bbL=\Spec_Y(\Sym\calL)$.
Then
\[
K_0(\bbL) \cong K_0(Y) \oplus \Q[x,y].
\]
\end{ex}

\smallskip
For our second example, recall that if $R$ is a regular $\Q$-algebra
and $J$ is a rank~1 projective $R$-module and $A$ is the subring
$R[J^2,J^3]$ of $R[J]=\Sym_R(J)$ then $\Spec(A)$ is an affine cusp
bundle over $\Spec(R)$.  For $n\ge2$, set
\[
V_n(R) =
\begin{cases}
J^{6(i-1)}\oplus (J^{6(i-2)}\oo\Omega_R^2)\oplus\cdots
 \oplus (R\oo\Omega_R^{n-2}), & n=2i\ge2; \\
J^{6(i-1)}\oo\Omega_R^1\oplus(J^{6(i-2)}\oo\Omega_R^3)\oplus\cdots
 \oplus (R\oo\Omega_R^{n-2}), & n=2i+1\ge3.
\end{cases}
\]
In particular, $V_2(R)=R$ and $V_3(R)=\Omega_R^1$.
Let us write $\widetilde{K}_n(A)$ for $K_n(A)/K_n(R)$.

\begin{prop}\label{GRW 9.2}
If $A=R[J^2,J^3]$ and $R$ is a regular $\Q$-algebra then
\[
\widetilde{K}_n(A)
\cong
(J^5\oplus J^6) \oo V_n(R) \oplus (J\oo\Omega_R^n).
\]
In particular, $\widetilde{K}_0(A)\cong J$,
$\widetilde{K}_1(A)\cong J\oo\Omega^1_R$ and
\[
\widetilde{K}_2(A)
\cong (J^5\oplus J^6) \oplus (J\oo\Omega_R^2).\]
\end{prop}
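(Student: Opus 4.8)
The plan is to compute $\widetilde K_n(A) = K_n(A)/K_n(R)$ by the same cyclic-homology machinery used throughout the paper, applied now to the (non-flat) cusp-bundle algebra $A = R[J^2,J^3] \subset R[J] = \Sym_R(J)$. First I would reduce to the affine/local situation: since $J$ is locally free of rank~$1$ on $\Spec R$, we may work Zariski-locally and assume $J \cong R$, in which case $A$ is the familiar coordinate ring $R[t^2,t^3]$ of the cuspidal curve over $R$; the general twisted statement is then recovered by Zariski descent, exactly as in Remark \ref{S-module} and in Lemma \ref{aff:Kunneth}, with the various tensor factors $J^a$ bookkeeping the weights. Because $R$ is a regular $\Q$-algebra, $F_K(R) \simeq 0$, so $F_K(A) \simeq F_K(A,R)$, and by the cyclic-homology comparison (Theorem~1.6 of \cite{chw-v}, as used in the proof of Proposition \ref{Kunneth}) we have $K(A,R) \simeq F_{HC/\Q}(A,R)[1]$; moreover $F_{HC}(A,R)$ splits into weight pieces and, via the SBI sequence, is controlled by $F_{HH}(A,R)$ and hence by the Hochschild homology of the cusp.

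The heart of the computation is therefore the Hochschild (and cyclic) homology of $R[t^2,t^3]$ relative to $R$. I would invoke the known computation of $HH_*$ and $HC_*$ of the cuspidal cubic — this is classical (e.g. the normalization sequence comparing $R[t^2,t^3] \hookrightarrow R[t]$ with conductor ideal, or a direct resolution), and over a regular base $R$ it upgrades $R$-linearly to give $HH_*(A,R)$ as an explicit sum of copies of $\Omega^j_R$ with the weight grading induced by the $t$-grading. Feeding this through the SBI sequence gives $HC_*(A,R)$, hence $F_{HC/\Q}(A,R)$, hence $\widetilde K_n(A)$; the bookkeeping of which $\Omega^j_R$ appears in which degree, and with which power of $J$, is precisely what produces the modules $V_n(R)$ together with the two "edge" summands $(J^5 \oplus J^6)\oo V_n(R)$ and the top summand $J \oo \Omega^n_R$. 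The shift $n \mapsto n-1$ between $K$ and $F_{HC}$, and the $j \mapsto j-1$ shift between weights of $HC$ and of $HH$ (as recorded just after Proposition \ref{Kunneth}), explain why $\widetilde K_0(A)\cong J$ (weight $1$, degree $0$ part), $\widetilde K_1(A)\cong J\oo\Omega^1_R$, and why the formula for $\widetilde K_2(A)$ has the stated shape with $V_2(R)=R$.

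I expect the main obstacle to be the precise weight-by-weight organization of $HH_*(R[t^2,t^3]/R)$: unlike the smooth case $R[t]$ (where one has the clean Künneth formula of Lemma \ref{aff:Kunneth}), the cusp is not a flat — indeed not a polynomial — extension, so one must genuinely compute, and then keep careful track of how the $t$-degree grading interacts with the Hochschild degree to yield exactly the index pattern $6(i-1), 6(i-2), \dots$ and the parity-dependent starting differential form. A secondary point requiring care is checking that the localization/Zariski-descent step legitimately converts the untwisted $R[t^2,t^3]$ answer into the $J$-twisted answer with the correct powers $J^5, J^6, J$ and $J^{6(i-1)}$ appearing — i.e. that the graded-module structure over $\Sym_R(J^2,J^3)$ assembles the local pieces correctly, which follows from the descent argument of Remark \ref{S-module} once the local computation is pinned down. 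The "in particular" clauses are then immediate specializations: substitute $n=0,1,2$ and use $V_2(R)=R$, $V_3(R)=\Omega^1_R$.
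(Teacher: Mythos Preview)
Your proposal is correct and follows essentially the same strategy as the paper: reduce to the local case $J\cong R$, where $A=R[t^2,t^3]$ and the result is Theorem~9.2 of \cite{GRW}, and then recover the twisted statement by tracking $t$-weights under Zariski descent. The paper carries out the weight-tracking more explicitly than you do: rather than invoking the $F_K\simeq F_{HC}[1]$ machinery, it exhibits the specific Hochschild cycles $z=2x[y]+3y[x]$ (of $t$-degree~$5$), $tz$ (degree~$6$), and $w=[y|y]-x[x|x]-[x^2|x]$ (degree~$6$), and observes that the products $zw^{i-1}$ and their multiples by $\Omega_R^{*}$ locally generate the summands, so the $J$-powers can be read off directly; for $n\le1$ it uses the classical Mayer--Vietoris sequence for the conductor square $A\subset R[J]$ rather than the cdh comparison.
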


\begin{proof}
For $J=R$, this is Theorem 9.2 of \cite{GRW}, which holds for any
regular $\Q$-algebra $R$ (not just for any field). In order to
pass to $R[J^2,J^3]$, we need more detail.  Using the classical
Mayer-Vietoris sequence for $A\subset R[J]$, it is easy to see that
$K_0(A)/K_0(R)\cong J$ and $K_1(A)/K_1(R)\cong J\oo\Omega^1_R$.

For $n\ge2$ the factors in $K_n(A)$ come from $HH_{n-1}(A)$ via the maps
$HH_*(A)\to HC_*(A)$ and $\widetilde{K}_n(A)\to \widetilde{HC}_{n-1}(A)$.
The summand $J\oo\Omega_R^n$ of $K_n(A)$ comes from
the $J\oo\Omega_R^1$ in $K_1(A)$ (or $HH_0(A,R[J],J)$)
by multiplication by $HH_{n-1}(R)\cong\Omega^{n-1}_R$.

The $V_n$ factors come from the explicit description of the corresponding
cyclic homology cycles (coming from cycles in Hochschild homology
$HH_{n-1}(A)$) in 4.3, 4.7 and 5.8 of \cite{GRW}.  Locally,
$J$ is generated by an element $t$; we set $x=t^2\in J^2$,
$y=t^3\in J^3$ so that $y^2=x^3$.  The summands $J^5$ and $J^6$ of $K_2(A)$
are locally generated by the cycles $z=2x[y]+3y[x]$
and $tz=2y[y]+3x^2[x]$ in $HH_1(A)$.
Multiplication by $\Omega_R^{n-2}$ gives the summands
$(J^5\oplus J^6)\oo\Omega_R^{n-2}$ in $K_{n}(A)$.

Now consider the summand $J^6$ in the degree~2 part $A^{\oo3}$ of the
Hochschild complex for $A$, locally generated by the
element $w=[y|y]-x[x|x]-[x^2|x]$.  The product $zw^{i-1}$
is a cycle in $HH_{2i-1}(A)$, and locally generates
a summand $J^{5+6(i-1)}$ of $HH_{2i-1}(A)$, corresponding to the
factor $J^{5+6(i-1)}$ of the summand $J^5\oo V_{2i}(R)$
of $K_{2i}(A)$.  As above, multiplication by $\Omega_R^{*}$
gives the rest of the summands.
\end{proof}

\begin{subrem}\label{MVcusp}
In the spirit of Corollary \ref{TK(RL)}, we note that
$NK_n(A) \cong TK_n(A)\oo_R LR[L]$, where
\[
TK_n(A) = \widetilde{K}_n(A) \oplus \widetilde{K}_n(A).
\]
\end{subrem}

\begin{thm}\label{thm:cusp-bundle}
Let $J$ be the invertible sheaf $\cO(P-Q)$ on the ellipic curve $E$
and let $X$ denote the affine cusp bundle $\Spec_E(\cO_E[J^2,J^3])$ over $E$.
($X$ has a codimension~1 singular locus.)
If $J$ does not have finite order in $\Pic(E)$ then
$X$ is $K_n$-regular for all integers $n$: for all $m\ge0$ we have
\[  K_n(X) \cong K_n(X\times\A^m) \cong K_n(E) \]
On the other hand, if
$\bbL=\Sym_E(\cO(Q))$ then $K_{-1}(\bbL)\ne K_{-1}(X)$ and
$K_0(\bbL)\ne K_0(X)$.
\end{thm}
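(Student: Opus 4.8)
The plan is to prove Theorem \ref{thm:cusp-bundle} in two parts, first establishing $K_n$-regularity using the explicit computations from Section 2 applied to the cusp bundle, then producing the inequality $K_0(\bbL)\neq K_0(X)$ (and $K_{-1}$) by twisting with the ample sheaf $\cO(Q)$ and invoking Proposition \ref{GRW 9.2} to show the relevant $\Omega$-groups on $E$ do not vanish.

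\medskip

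For the first part, I would proceed as follows. Since $X = \Spec_E(\cO_E[J^2,J^3])$ is affine over $E$, I can compute $K_n(X)$ via Zariski descent on $E$: writing $p:X\to E$ for the structure map, $K(X)$ is the hypercohomology $\bbH_\zar(E,p_*K(\cO_E[J^2,J^3]))$. The relative term $K_n(X)/K_n(E)$ is then assembled from the sheafified version of $\widetilde K_n(\cO_{E,x}[J^2_x,J^3_x])$ of Proposition \ref{GRW 9.2}, i.e.\ from the Zariski sheaves built from $J^5\oplus J^6$, $J\oo\Omega^1_E$, $\Omega^2_E$, etc. But $E$ is a curve, so $\Omega^n_E=0$ for $n\geq2$ and $\Omega^1_E\cong\cO_E$; moreover $\dim E=1$, so all the higher-weight $cdh$-cohomology terms vanish as well. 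So $\widetilde K_n(X)$ is built from (at most) $H^*(E,J^{5k}\oplus J^{6k})$ and $H^*(E,J\oo\cO_E)=H^*(E,J)$. Here the hypothesis that $J=\cO(P-Q)$ has infinite order in $\Pic(E)$ enters: a nontrivial degree-$0$ line bundle on an elliptic curve has no cohomology, $H^0(E,J^m)=H^1(E,J^m)=0$ for all $m\neq0$ (and similarly for $J^m\oo\Omega^1_E\cong J^m$). Hence all the relative sheaves have vanishing hypercohomology, giving $K_n(X)\cong K_n(E)$; the same argument applied to $X\times\A^m$ (using $NK$-regularity of $X\times\A^{m-1}$, or directly via Remark \ref{MVcusp}) yields $K_n(X\times\A^m)\cong K_n(E)$ as well.

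\medskip

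For the second part, I would apply Proposition \ref{Kunneth} with $\calL=\cO(Q)$: the relative term $K_n(\bbL,X)$ is, up to a shift, $F_{HC/\Q}(\bbL,X)$, which by Corollary \ref{cor:Kunneth} (sheafified) is a sum of $\bbH_\zar(X,F_{HH}\oo p^*\calL^{\oo j})$ over $j\geq1$. In low weights, $F_{HH}^{(0)}$ is the cone of $\cO_X\to Ra_*a^*\cO_X$, which computes the seminormalization defect, and $F_{HH}^{(1)}$ involves $\Omega^1$ in a similar way. Because $X$ has a codimension-$1$ singular locus, the sheaf $H^0(X,F_{HH}^{(\bullet)})$ is nonzero — concretely, $K_0(X,\bbL)$ picks up a copy of (a twist of) the conductor-type module $J^5\oplus J^6$ or the normalization defect, pulled back to $X$ and twisted by $p^*\cO(Q)^{\oo j}$. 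The key point is that $p^*\cO(Q)$ on $X$ pushes forward to a sheaf whose sections over $E$ are $\bigoplus_j H^0(E, \cO(jQ)\oo(\text{defect sheaf}))$, and now $\cO(jQ)$ is ample of positive degree, so for $j\gg0$ these global sections are \emph{nonzero} even after tensoring with $J^5\oplus J^6$ or $J$. Thus $K_0(\bbL)\neq K_0(X)$; the $K_{-1}$ statement follows similarly from the weight-$0$ piece (the normalization/seminormalization defect of the cuspidal fibers contributes in degree $-1$), which does not vanish after the ample twist. \textbf{The main obstacle} will be bookkeeping the exact sheaves and weights: I need to verify that after twisting by $p^*\cO(Q)^{\oo j}$ and pushing to $E$, the relevant hypercohomology group genuinely survives — this requires knowing the pushforward of the defect sheaf on $X$ to $E$ is coherent and nonzero, and that tensoring with a sufficiently ample line bundle on $E$ produces global sections, which is Serre vanishing/generation (as in Proposition \ref{twist}) applied on the curve $E$ rather than abstractly on $X$.
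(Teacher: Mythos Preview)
Your argument for the first half matches the paper's: Zariski descent on $E$ combined with Proposition \ref{GRW 9.2} reduces $K_n(X\times\A^m)/K_n(E)$ to the cohomology of sheaves that are direct sums of $J^m$ with $m\ne0$, and these vanish because $J$ has infinite order in $\Pic(E)$.

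For the second half you take a different and unnecessarily heavy route. The paper does \emph{not} pass through Proposition \ref{Kunneth} or the $F_{HH}$ machinery; it simply reuses Proposition \ref{GRW 9.2} with the regular base $R$ replaced by $R[L]$ (globally, $\cO_E$ by $\Sym_E\calL$). Since $\bbL$ is affine over $E$ with fibres $R[L][J^2,J^3]$ and $R[L]$ is again regular, Proposition \ref{GRW 9.2} gives $\widetilde K_0\cong J\otimes R[L]$ locally, and the same descent spectral sequence over $E$ used in Part~1 now yields
\[
\widetilde K_{-1}(\bbL)\cong\bigoplus_{j\ge1}H^1(E,J\otimes\calL^j),
\qquad
\widetilde K_{0}(\bbL)\cong\bigoplus_{j\ge1}H^0(E,J\otimes\calL^j)\ \oplus\ \widetilde K_{-1}(\bbL),
\]
which are nonzero because $\calL$ is ample (so $J\otimes\calL^j$ has positive degree). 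That is the whole argument.

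Your approach via $F_{HH}^{(p)}$ would ultimately arrive at the same place (pushing forward along $p:X\to E$ and using the projection formula recovers exactly these sheaves on $E$), but it is more laborious, and your identification of the contributing sheaf is off: in degree $0$ the relevant piece is $\widetilde K_0\cong J$, not $J^5\oplus J^6$, which only enters for $n\ge2$. What your route buys is a concrete illustration of the mechanism behind Theorem \ref{main}; what the paper's route buys is that both halves of Theorem \ref{thm:cusp-bundle} are handled by the identical tool with no extra bookkeeping.
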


\begin{proof}
Since $\Omega_E\cong\cO_E$, $V_n(\cO_E)$ is a sum of terms $J^i$ for
$i>0$; the same is true for the pushforward of the sheaf
$V_n(\cO_E[t_1,...,t_m])$ to $E$.
Recall that $H^p(E,J^r)=0$ for all $r\ne0$.
From the Zariski descent spectral sequence
\[
E_2^{p,q} \!=\! H^p(E,K_{-q}(\cO_E[J^2,J^3][t_1,...,t_m])/K_{-q}(\cO_E))
\Rightarrow K_{-p-q}(X\times\A^m)/K_{-p-q}(E)
\]
we see that $K_n(X\times\A^m)\cong K_n(E)$ for all $n$.

On the other hand, Proposition \ref{GRW 9.2} yields
$\widetilde{K}_{-1}(\bbL)\cong\oplus_{j\ge1}H^1(E,J\oo\calL^j)$
and
\[
\widetilde{K}_{0}(\bbL)\cong \oplus_{j\ge1}H^0(E,J\oo\calL^j)
\oplus \widetilde{K}_{-1}(\bbL).
\]
These groups are nonzero because $\calL$ is ample.
\end{proof}

\medskip
\subsection*{Acknowledgements}
The authors would like to thank the Tata Institute, the University
of Melbourne  and the Hausdorff Institute for Mathematics
for their hospitality during the preparation of this paper.

\goodbreak

\end{document}